\crefname{equation}{}{}
\crefname{lemma}{Lemma}{Lemmas}
\crefname{thm}{Theorem}{Theorems}
\newcommand{\inner}[1]{\left( {#1} \right)}
\newcommand{\dual}[1]{\left\langle {#1} \right\rangle}
\newcommand{\prox}[0]{ {\bf prox}}
\newcommand{\dom}[0]{ {\bf dom\,}}
\newcommand{\argmin}[0]{ {\rm argmin}}
\newcommand{\nm}[1]{\left\lVert {#1} \right\rVert}
\newcommand{\snm}[1]{\left\lvert {#1} \right\rvert}
\newcommand{\ssnm}[1]
{
  \left\vert\kern-0.25ex
  \left\vert\kern-0.25ex
  \left\vert
  {#1}
  \right\vert\kern-0.25ex
  \right\vert\kern-0.25ex
  \right\vert
}
\def\spher@harm#1{%
  \vbox{\hbox{%
    \offinterlineskip
    \valign{&\hb@xt@2\p@{\hss$##$\hss}\vskip.2ex\cr#1\crcr}%
  }\vskip-.36ex}%
}
\def\gshone{\spher@harm{.}}
\def\gshtwo{\spher@harm{.&.}}
\def\gshthree{\spher@harm{.&.&.}}
\let\gsh\spher@harm
\newtheorem{Def}{Definition}[section]
\newtheorem{lemma}{Lemma}[section]
\newtheorem{remark}{Remark}[section]
\newtheorem{theorem}{Theorem}[section]
\newtheorem{proposition}{Proposition}[section]
\def\@captype{table}\makeatother
\begin{document}

	\title{From differential equation solvers to
		accelerated first-order methods for convex optimization
		\thanks{Hao Luo was supported by the China Scholarship Council (CSC) joint Ph.D. student scholarship (Grant 201806240132).}
	}
	
	
	%
	%
	\author{	
			Hao Luo \thanks{Email: galeolev@foxmail.com},
	Long Chen \thanks{Corresponding author. Email: chenlong@math.uci.edu}. \\
}
	\date{}

	\maketitle
	
	\begin{abstract}
	Convergence analysis of accelerated first-order methods 
for convex optimization problems 
are presented from the point of view of ordinary differential 
equation solvers. A new dynamical system, called Nesterov accelerated gradient flow, has been
derived from the connection between acceleration mechanism and $A$-stability of ODE solvers, and the exponential decay of a tailored Lyapunov function along with the solution trajectory is proved. Numerical discretizations are then considered and convergence rates are 
established via a unified discrete Lyapunov function. 
The proposed differential equation solver approach can 
not only cover existing accelerated methods, 
such as FISTA, G\"{u}ler's proximal algorithm and Nesterov's accelerated gradient method, but also produce  
new algorithms for composite convex optimization that possess accelerated convergence rates. 
	\end{abstract}
	
	\medskip\noindent{\bf Keywords:} 
Accelerated first-order methods, ordinary differential equation, convergence analysis , convex optimization, Lyapunov function, exponential decay
	
	\medskip\noindent{\bf AMS subject classification.} 
	37N40, 65L20, 65B99, 90C25.
	
\section{Introduction}
We consider iterative methods for solving the unconstrained minimization problem
\begin{equation}\label{min}
	\min_{x\in V} f(x),
\end{equation}
where $V$ is a Hilbert space, and 
$f:V\to\mathbb R\cup\{+\infty\}$ is a properly closed convex function. 
We shall first consider smooth $f$ on the entire space $V$ and later focus on the composite case $f = h + g$ where both $h$ (smooth) and $g$ (non-smooth)
are convex on some (simple) closed convex set $Q\subseteq V$. 
We are mainly interested in the development and analysis of accelerated
first-order methods.

Suppose $V$ is equipped with the inner product $(\cdot,\cdot)$ and the correspondingly induced norm $\nm{\cdot}$.
We use 
$\dual{\cdot,\cdot}$ to denote the duality pair 
between $V^*$ and $V$, 
where $V^*$ is the continuous dual space of $V$ and is endowed with the conventional dual norm $\nm{\cdot}_{*}$. For any interval 
$I\subseteq \mathbb R$,
denote by $C^k(I;V)$ the space of all $k$-times 
continuous differentiable $V$-valued functions on $I$, 
and the superscript $k$ is dropped when $k=0$. 
Let $\Omega\subseteq V$ be some closed convex subset, we 
say $f\in\mathcal S_{\mu}^{1}(\Omega)$ if it is 
continuous differentiable on $\Omega$ and there exists 
$\mu\geqslant 0$ such that
\begin{equation}\label{eq:convex-mu}
	f(x) - f(y) - \langle \nabla f(y), x - 
	y \rangle  \geqslant \frac{\mu}{2} \| x- y \|^2
	\quad \forall\, x,y \in \Omega.
\end{equation}
We call~\eqref{eq:convex-mu} the $\mu$-convexity of $f$ and when $\mu>0$, we say $f$ is strongly convex. We also 
write $f\in\mathcal S^{1,1}_{\mu,L}(\Omega)$ if 
$f\in\mathcal S_{\mu}^{1}(\varOmega)$ and $\nabla f$ is Lipschitz 
continuous on $\Omega$: there exists $0<L<\infty$ such that
\begin{equation}\label{eq:Lip-L}
	\|\nabla f(x) - \nabla f(y)\|_{*} 
	\leqslant L\| x - y \| \quad \forall\, x,y \in \Omega.
\end{equation}
By~\cite[Theorem 2.1.5]{Nesterov:2013Introductory},
this implies the inequality
\begin{equation}\label{eq:Lip-L-equiv}
	f(x) - f(y) - \langle \nabla f(y), x - 
	y \rangle  \leqslant \frac{L}{2} \| x- y \|^2
	\quad \forall\, x,y \in \Omega.
\end{equation}
For $\Omega = V$, we shall write $\mathcal S_{\mu}^{1}(\Omega)$ 
and $\mathcal S^{1,1}_{\mu,L}(\Omega)$ as $\mathcal S_{\mu}^{1}$ and 
$\mathcal S^{1,1}_{\mu,L}$, respectively. 

The above functional classes are what we work with in this paper. As for the optimization problem \eqref{min}, we also care about the global minimizer(s) of $f$. For strongly convex $f\in\mathcal S_{\mu}^1(Q)$ with $\mu>0$, it is well-known that the minimizer exists uniquely. However, for convex case $\mu=0$, to promise the existence of minimizers, additional assumption, such as coercivity condition, which means $f(x)\to\infty$ when $\nm{x}\to\infty$, is usually imposed. Throughout, we denote by $\argmin f$ the set of global minimizers of \eqref{min} and assume it is nonempty.

One approach to derive the gradient descent (GD)
method is discretizing an ordinary differential 
equation (ODE), i.e., the so-called gradient flow:
\begin{equation}\label{gradientflow}
	x'(t) = - \nabla f(x(t)),\quad t>0.
\end{equation}
Here we introduce an artificial time variable $t$ and $x'$ is 
the derivative taken with respect to $t$. 
For ease of notation, in the sequel, we shall omit $t$ 
when no confusion arises. The simplest 
forward (explicit) Euler method with step size $\eta_k>0$ 
leads to the GD method 
\begin{equation*}
	x_{k+1} = x_k - \eta_k \nabla f(x_k).    
\end{equation*}
In the terminology of numerical analysis, it is well-known that 
this method is {\it conditionally $A$-stable} (cf. Section \ref{sec:stab}), 
and for $f\in\mathcal S_{\mu,L}^{1,1}$ with $0\leqslant \mu\leqslant L<\infty$, the 
step size $\eta_k=1/L$ is allowed to get the rate (see \cite[Chapter 2]{Nesterov:2013Introductory})
\begin{equation}\label{eq:rate-gd}
	O\left(\min\big\{L/k,(1+\mu/L)^{-k}\big\}\right).
\end{equation}
One can also consider the backward (implicit) Euler method 
\begin{equation}\label{eq:intro-implicit}
	x_{k+1} = x_k - \eta_k \nabla f(x_{k+1}),
\end{equation}
which is {\it unconditionally $A$-stable} (cf. Section \ref{sec:stab}) and 
coincides with the well-known proximal point algorithm (PPA)~\cite{Parikh:2014}
\begin{equation}
	\label{eq:prox}
	x_{k+1} = \prox_{\eta_k f}(x_k):=\mathop{\argmin}\limits_{y\in V} 
	\left ( f(y) + \frac{1}{2\eta_k}\| y - x_k \|^2 \right ).
\end{equation}
Note  that this method allows $f$ to be nonsmooth and possesses linear 
convergence rate even for convex objective, as long as 
$\eta_k\geqslant\eta>0$ for all $k>0$.
\subsection{Main results}	
Let us start from the quadratic objective $f(x) = \frac{1}{2}x^{\top}Ax$ over ${\mathbb R}^d$, for which the gradient flow~\eqref{gradientflow} reads simply as 
\begin{equation}\label{eq:x'Ax}
	x' = -Ax,
\end{equation}
where $A$ is symmetric positive semi-definite and makes $f\in\mathcal S_{\mu,L}^{1,1}$.
Instead of solving ~\eqref{eq:x'Ax}, we turn to a general linear ODE system
\begin{equation}\label{eq:yGy-intro}
	y'=Gy.
\end{equation}
Briefly speaking, our main idea is to seek such a system~\eqref{eq:yGy-intro} with some asymmetric block matrix $G$ that transforms the spectrum of $A$ from the real line to the complex plane and reduces the condition number from $\kappa(A) = L/\mu$ to $\kappa(G) = O(\sqrt{L/\mu})$. Afterwards, accelerated gradient methods may be constructed from $A$-stable methods for solving~\eqref{eq:yGy-intro}  with a significant larger step size and consequently improve the contraction rate from $O((1-\mu/L)^k)$ to $O((1-\sqrt{\mu/L})^k)$.  
Furthermore, to handle the convex case $\mu=0$, we combine the transformation $G$ with suitable time scaling technique; for more details, we refer to Section \ref{sec:stab}.

One successful and important transformation example is given below
\begin{equation}\label{eq:yGy}
	G = 
	\begin{pmatrix}
		- I & \quad I \\
		\mu/\gamma -A/\gamma & \quad -\mu/\gamma \, I
	\end{pmatrix},
\end{equation}
where 
the built-in scaling factor $\gamma$ 
is positive and satisfies 
\begin{equation}\label{eq:gama}
	\gamma' =\mu-\gamma,\quad\gamma(0)=\gamma_0>0.
\end{equation}
Based on this, for general $f\in\mathcal S_\mu^1$ with $\mu\geqslant 0$, we replace $A$ in~\eqref{eq:yGy} with $\nabla f$ and write $y=(x,v)$ to obtain a first-order dynamical system:
\begin{equation}\label{eq:intro-NAG-system}
	\left\{
	\begin{split}
		x' = {}&v-x,\\
		v'={}&\frac{\mu}{\gamma}(x-v) - \frac{1}{\gamma}\nabla f(x),
	\end{split}
	\right.
\end{equation}
where $\gamma$ solves \eqref{eq:gama}.
Eliminating $v$, we arrive at a second-order ODE of $x$:
\begin{equation}\label{eq:ode-NAG-intro}
	\gamma x''+
	\left(\mu+\gamma\right)x'
	+\nabla f(x)=0,
\end{equation}
which is actually a heavy ball model
(cf.~\eqref{eq:heavy ball-ode}) with novel variable damping coefficients in front of $x''$ and $x'$.
Thanks to the scaling factor $\gamma$, we can handle both the convex case ($\mu = 0$) and the strongly convex case ($\mu > 0$) in a unified way. 	Moreover, we shall prove that for $\mu\geqslant 0$, there holds the exponential decay property
\begin{equation}\label{eq:conv-Lt-intro}
	\mathcal L(t)\leqslant e^{-t}\mathcal L(0),\quad t>0,
\end{equation}
for a tailored Lyapunov function 
\begin{equation}\label{eq:Lt-intro}
	\mathcal L(t)=
	f(x(t))-f(x^*)+\frac{\gamma(t)}{2}
	\nm{v(t)-x^*}^2,\quad t>0,
\end{equation}
where $x^*\in\argmin f$ is a global minimizer of $f$.

Accelerated gradient methods based on numerical discretizations of 
the dynamical system~\eqref{eq:intro-NAG-system} with $f\in\mathcal S_{\mu,L}^{1,1}$
are then considered and analyzed by means of 
a discrete version of the 
Lyapunov function~\eqref{eq:Lt-intro}.
It will be shown that the
implicit scheme (see \eqref{eq:im-ode-NAG}) possesses
linear convergence rate as long 
as the time step size is uniformly bounded below.
This matches the exponential decay 
rate~\eqref{eq:conv-Lt-intro}
in the continuous level. Also, for convex case $\mu=0$, this implicit method amounts to an accelerated PPA,  that is very close to G\"{u}ler's PPA~\cite{guler_new_1992} and enjoys the same rate $O(1/k^2)$ (cf. Theorem \ref{thm:appa}). In Section \ref{sec:GS-correc}, for semi-implicit schemes 
with suitable corrections (either an extrapolation or a gradient step), 
we prove the following convergence rate 
\begin{equation}\label{eq:acc-rate}
	O\left(\min\big\{L/k^2,(1+\sqrt{\mu/L})^{-k}\big\}\right),
\end{equation}
which is optimal in the sense of \cite{Nesterov:2013Introductory}.
Moreover, we can recover Nesterov's optimal 
method~\cite{Nesterov1983,Nesterov:2013Introductory} {\it exactly}
from a semi-implicit scheme with gradient descent correction; see Section \ref{sec:NAG}. Therefore, instead of using estimate sequence, our ODE approach provides an alternative derivation of Nesterov's method and hopefully more intuitive for understanding the acceleration mechanism. From this point of view, we name both~\eqref{eq:intro-NAG-system} and~\eqref{eq:ode-NAG-intro} as {\em Nesterov accelerated gradient (NAG) flow}.

As a proof of concepts, we also generalize our NAG flow to the composite optimization problem
\begin{equation}\label{eq:comp-min}
	\min_{x\in Q} f(x):=
	\min_{x\in Q} \left [ h(x)+g(x) \right ],
\end{equation}
where $Q\subseteq V$ is a (simple) closed convex set, $h\in\mathcal S_{\mu,L}^{1,1}(Q)$ with $0\leqslant \mu\leqslant L<\infty$ 
and $g:V\to\mathbb R\cup\{+\infty\}$ is proper, closed and convex. As usual, we use $\dom g$ to denote the effective domain of $g$ and assume that $Q\cap {\bf dom}\, g\neq\emptyset$.
Treating~\eqref{eq:comp-min} as an unconstrained minimization 
of $F=f+i_Q$ where $i_Q$ denotes the indicator function of $Q$, the generalized version of~\eqref{eq:ode-NAG-intro} is a second-order differential inclusion
\begin{equation}\label{eq:g-acflow}
	\gamma x''+
	\left(\mu+\gamma\right)x'
	+\partial F(x)\ni 0.
\end{equation}
We shall give the solution existence of~\eqref{eq:g-acflow} in proper 
sense and then obtain the exponential decay \eqref{eq:conv-Lt-intro} for almost all $t>0$.

For the unconstrained case $Q = V$, by using the tool of composite gradient mapping~\cite[Chapter 2]{Nesterov:2013Introductory}, a semi-implicit scheme with correction for the generalized NAG flow~\eqref{eq:g-acflow} is presented and leads to an accelerated proximal gradient method (APGM); see Algorithm \ref{algo:APG}. We also give a simplified variant that is closely related to FISTA~\cite{Beck2009}. For the constrained problem~\eqref{eq:comp-min}, an accelerated forward-backward method is proposed in Algorithm \ref{algo:AFB}. Both two algorithms call the proximal operation of $g$ (over $Q$) only once in each iteration, and they are proved to share the same convergence rate~\eqref{eq:acc-rate}.

The rest of this paper is organized as follows. 
In the continuing of the introduction, 
we will review some existing works devoting to  
the accelerated gradient methods from the ODE point of view. 
Next, in Section \ref{sec:stab}, we shall explain the acceleration mechanism
from $A$-stability theory of ODE solvers and derive our NAG flow model as well. 
Then in Section \ref{sec:acceleratedflow} we 
focus on the NAG flow and prove its exponential decay. 
After that, accelerated gradient methods based on numerical discretizations 
are proposed and analyzed in Sections \ref{sec:im}, \ref{sec:GS-correc} and \ref{sec:NAG}. 
Finally, in Section \ref{sec:comp}, 
we extend the our NAG flow to 
composite optimization 
and propose two new accelerated methods
with convergence rate analysis. 
\subsection{Related works}
The well-known momentum method can be traced back to 1960s. In~\cite{polyak_methods_1964}, 
Polyak studied the heavy ball (HB) method 
\begin{equation}
	\label{eq:heavy ball}
	x_{k+1} =x_k -\alpha\nabla f(x_k)+\beta(x_k -x_{k-1})
\end{equation}
and its continuous analogue, the 
heavy ball dynamical system:
\begin{equation}
	\label{eq:heavy ball-ode}
	x''+\alpha_1 x'+\alpha_2\nabla f(x) = 0.
\end{equation}
Local linear convergence 
results for~\eqref{eq:heavy ball} and~\eqref{eq:heavy ball-ode} 
via spectrum analysis were established in \cite[Theorem 9]{polyak_methods_1964}. Note that the HB method~\eqref{eq:heavy ball} 
adds a momentum term up to
the gradient step and is sensitive to its parameters. For $f\in\mathcal S_{\mu,L}^{1,1}$, it shares the same theoretical convergence rate \eqref{eq:rate-gd} as the gradient descent method; see \cite{ghadimi_global_2015,sun_non-ergodic_2018}. To our best knowledge, no work has established the global accelerated rate \eqref{eq:acc-rate} for the original HB method \eqref{eq:heavy ball}. Recently, Nguyen et al.~\cite{nguyen_accelerated_2018} developed the so-called 
accelerated residual method which
combines~\eqref{eq:heavy ball} 
with an extra gradient descent step:
\[
\left\{
\begin{split}
	{}&y_{k} =x_k -\alpha\nabla f(x_k)+\beta(x_k -x_{k-1}),\\
	{}&x_{k+1} = y_k-\frac{\alpha}{\beta+1}\nabla f(y_k).
\end{split}
\right.
\]
Numerically, 
they verified the efficiency and usefulness 
of this method with a restart strategy. 
We refer to~\cite{alvarez_minimizing_2000,attouch_heavy_2000,balti_asymptotic_2016,goudou_gradient_2009} 
for further investigations of the HB system~\eqref{eq:heavy ball-ode}. 

To understand 
an accelerated gradient method with 
the rate $O(1/k^2)$
proposed by Nesterov~\cite{Nesterov1983},
Su, Boyd and  Cand\`es~\cite{Su;Boyd;Candes:2016differential} 
derived the following second-order ODE
\begin{equation}\label{eq:Su2015-ode}
	x'' + \frac{\alpha}{t}x' + \nabla f(x) = 0,\quad t>0,
\end{equation}
where $\alpha>0$ and $f\in\mathcal S_{0,L}^{1,1}$.
If $\alpha\geqslant 3$ or 
$1<\alpha<3$ and $(f-f(x^*))^{(\alpha-1)/2}$ 
is convex, then they proved the decay rate $O(t^{-2})$.
If $\alpha\geqslant 3$ and $f$ is strongly convex, 
then they also obtained a faster rate $O(t^{-2\alpha/3})$.
Later on, Aujol and Dossal~\cite{aujol_optimal_2017} established a generic result:
\begin{equation}\label{eq:conv-Su2015-ode-r}
	f(x(t)) - f(x^*) \leqslant 
	\left\{
	\begin{aligned}
		&Ct^{-2},&&\text{if}~\alpha\geqslant 2\beta+1,\\
		&Ct^{-2\alpha/(2\beta+1)},&&\text{if}~0<\alpha<2\beta+1,
	\end{aligned}
	\right.
\end{equation}
where $\beta>0$ and $(f-f(x^*))^\beta$ is convex. 
Almost at the same time, Attouch et al.~\cite{Attouch:2019} obtained the estimate~\eqref{eq:conv-Su2015-ode-r} 
for $\beta=1$ and 
considered numerical discretizations for~\eqref{eq:Su2015-ode} 
with the convergence rate $O(k^{-\min\{2,2\alpha/3\}})$,
which matches the continuous decay property
\eqref{eq:conv-Su2015-ode-r} for the case $\beta=1$.
Also, Vassilis et al.
\cite{Vassilis2018} studied the non-smooth 
version of~\eqref{eq:Su2015-ode}:
\begin{equation}\label{eq:Su2015-ode-nonsmooth}
	x'' + \frac{\alpha}{t}x' + \partial f(x) \ni 0.
\end{equation}
They proved that the solution trajectory of 
\eqref{eq:Su2015-ode-nonsmooth}
converges to a minimizer of $f$ and 
derived the decay estimate~\eqref{eq:conv-Su2015-ode-r} for $\beta=1$. 
For more works and generalizations 
related to the model~\eqref{eq:Su2015-ode} and the corresponding algorithms, we refer to 
\cite{apidopoulos_convergence_2018,attouch_fast_2015,Attouch_2016-fast,attouch_convergence_2018,cabot_long_2009} and references therein.  

Recently, Wibisono et al.
\cite{Wibisono;Wilson;Jordan:2016variational} 
introduced a Lagrangian 
\begin{equation}\label{eq:Lagrangian}
	\mathcal E(y,w,t) =  \frac{e^{\int_{0}^{t}\alpha(s)\,\mathrm ds}}{\alpha(t)\beta(t)}
	\left(
	\frac{\beta(t)}{2}\nm{w}^2-\alpha^2(t)f(y)
	\right),
\end{equation}
for smooth and convex $f$, where the scaling function 
$\alpha:\mathbb R_+\to\mathbb R_+$ 
is continuous and $\beta:\mathbb R_+\to\mathbb R_+ $ satisfies
\begin{equation}\label{eq:beta-jordan}
	\beta'\geqslant -\alpha\beta,\quad \beta(0)=\beta_0>0.
\end{equation}
The Lagrangian~\eqref{eq:Lagrangian} itself
introduces a variational problem, the Euler--Lagrange 
equation to which is 
\begin{equation}\label{eq:ode-jordan}
	\left\{
	\begin{split}
		{}&			y' =\alpha(w-y),\\
		{}&			\beta w'=-\alpha 
		\nabla f(y).
	\end{split}
	\right.
\end{equation}
They then established the convergence rate (cf. 	\cite[Theorem 2.1]{Wibisono;Wilson;Jordan:2016variational})
\begin{equation}\label{eq:conv-Wibisono-ode}
	f(y(t))-f(x^*)\leqslant 
	e^{-\int_{0}^{t}\alpha(s)\,\mathrm ds}
	\mathcal L(0),
\end{equation}
by means of the Lyapunov function
\begin{equation*}
	\mathcal L(t)=e^{\int_{0}^{t}\alpha(s)
		\,\mathrm ds}\big [f(y(t))-f(x^*) \big ]+
	\frac{1}{2}\nm{w(t)-x^*}^2.
\end{equation*}

Following this work, for any $f\in\mathcal S_{\mu}^1$ with $\mu>0$, 
Wilson et al.~\cite{Wilson:2018} 
introduced another Lagrangian
whose Euler--Lagrange equations reads as
\begin{equation}\label{eq:Wilson-ode}
	\left\{
	\begin{split}
		{}&			y' = \alpha(w-y),\\
		{}&			\mu w'=\mu\alpha(y-w)-\alpha\nabla f(y),
	\end{split}
	\right.
\end{equation}
with the same scaling function
$\alpha$ in~\eqref{eq:Lagrangian}.
They proved the decay estimate
\eqref{eq:conv-Wibisono-ode} as well, 
by using the Lyapunov function
\begin{equation}\label{eq:Lt-WRJ}
	\mathcal L(t)=
	e^{\int_{0}^{t}\alpha(s)\,\mathrm ds}
	\left [
	f(y(t)) - f(x^*) + \frac{\mu}{2}\nm{w(t) - x^*}^2\right ].
\end{equation}
When $\alpha=\sqrt{\mu}$, \eqref{eq:Wilson-ode}
gives the following model
\begin{equation}    \label{eq:Wilson-ode-par}
	y''+2\sqrt{\mu}y'+\nabla f(y) = 0,
\end{equation}
which reduces to 
an HB system (cf.~\eqref{eq:heavy ball-ode}).
From another
Lyapunov function 
\[
\mathcal L(t) =
f(y(t)) - f(x^*)  +    
\frac{\mu}{2}\nm{y(t)+y'(t)/\alpha(t)-x^*}^2,
\]	
Siegel~\cite{Siegel:2019} also derived 
\eqref{eq:Wilson-ode-par} 
and proved that
\begin{equation*}
	f(y(t)) - f(x^*) 
	\leqslant 2e^{-\sqrt{\mu}t}
	\big [f(y(0))-f(x^*) \big ].
\end{equation*}
In addition, Siegel~\cite{Siegel:2019} and 
Wilson et al.~\cite{Wilson:2018} 
proposed two semi-explicit schemes for 
\eqref{eq:Wilson-ode-par} individually. 
Both of their schemes are 
supplemented with an extra
gradient descent step and share the same 
linear convergence rate $O((1-\sqrt{\mu/L})^k)$.

Recently, introducing the so-called duality gap which is the difference of appropriate upper and lower bound approximations for
the objective function, Diakonikolas and Orecchia~\cite{diakonikolas_approximate_2018} presented a general framework 
for the construction and analysis of continuous time dynamical systems 
and the corresponding numerical discretizations. 
They recovered several 
existing ODE models such as the gradient flow \eqref{gradientflow}, 
the mirror descent dynamic system and its accelerated version. 
We mention that the derivation of our NAG model and 
analyses of discrete algorithms are fundamentally different from 
their duality gap technique.	
\section{Stability of ODE Solvers and Acceleration}
\label{sec:stab}		
In what follows, for any square matrix $M\in\mathbb R^{d\times d}$, $\sigma(M)$ denotes the spectrum of $M$, i.e., the set of all eigenvalues of $M$. The spectral radius is then defined by $\rho(M) := \max_{\lambda \in \sigma(M)} |\lambda|$, and when $M$ is invertible, its condition number $\kappa(M) := \rho(M^{-1})\rho (M)$. If $\sigma(M)\subset\mathbb R$, then $\lambda_{\min}(M)$ and $\lambda_{\max}(M)$ stand for the minimum and maximum of $\sigma(M)$, respectively.  Moreover, $\nm{\cdot}_2$ is the usual $2$-norm for vectors and matrices.

To present our main idea as simple as possible, in this section, unless other specified, we restrict ourselves to the quadratic objective $f(x) = \frac{1}{2}x^{\top}Ax$, where $A$ is a symmetric matrix with the bound
\[
0\leqslant \mu:=\lambda_{\min}(A)\leqslant \lambda \leqslant   \lambda_{\max}(A):=L\quad\forall\,\lambda\in\sigma(A).
\]
For this model example, $\nabla f(x) = Ax$ and the gradient flow \eqref{gradientflow} reads as $x'=-Ax$. The global minimal is achieved at $x^*=0$, and when $\mu >0$, the condition number of $A$ is $\kappa(A)=L/\mu$. 

\subsection{A-stability of ODE solvers}
Let $G\in\mathbb R^{d\times d}$ and assume $\mathfrak{Re}(\lambda) < 0$ for all $\lambda
\in \sigma(G)$. For the linear ODE system
\begin{equation}\label{eq:G-ODE}
	y ' = G y, \quad y(0) = y_0\in\mathbb R^{d},
\end{equation}
it is not hard to derive that $\nm{y(t)}_2\to0$ as $t\to\infty$ (see~\cite[Theorem 7]{bellman_stability_1953} for instance). 
Hence $y^*=0$ is an
equilibrium of the dynamic system~\eqref{eq:G-ODE}.

We now recall the concept of $A$-stability of ODE solves~\cite{LeVeque:2007Finite,Suli:2010Numerical}. 
A one-step method $\phi$ for~\eqref{eq:G-ODE} 
with step size $\alpha>0$ 
can be formally written as
\begin{equation}\label{eq:1-step}
	y_{k+1} = E_{\phi}(\alpha, G) y_{k}.
\end{equation}
As $y^* = 0$ is an equilibrium point,~\eqref{eq:1-step} also gives the error equation.
The scheme $\phi$ is called {\it absolute stable} or {\it $A$-stable} if $\rho (E_{\phi}( \alpha, G)) < 1$ from which the asymptotic convergence  $y_{k} \to 0$ follows (cf.~\cite[Theorem 6.1]{Demmel:1997Applied}). 		
If $\rho (E_{\phi}( \alpha, G))  < 1$ holds for all $\alpha>0$, then it is called {\it unconditionally $A$-stable}, and if $\rho (E_{\phi}( \alpha, G))  < 1$ for any $\alpha\in I$, where $I$ is an interval of the positive half line, then the scheme is called {\it conditionally $A$-stable}.

If $E_{\phi}(\alpha,G)$ is normal, then $\| E_{\phi}(\alpha,G) \|_2=\rho(E_{\phi}(\alpha,G))$. Therefore for $A$-stable methods the linear convergence 
follows directly from
the norm contraction
\begin{equation}\label{eq:bd-by-rho}
	\nm{y_{k+1}}_2\leqslant 
	\rho(E_{\phi}(\alpha,G))\nm{y_{k}}_2.
\end{equation}
In general cases, however, bounding the spectral radius by one does not imply the norm contraction, i.e.,~\eqref{eq:bd-by-rho} may not be true when $E_{\phi}(\alpha,G)$ is non-normal, even if \eqref{eq:1-step} is $A$-stable. Nevertheless, we shall continue using the tool of $A$-stability through spectral analysis and comment on its limitation in Section \ref{sec:limit}.

\subsection{Implicit and Explicit Euler methods}	\label{sec:GD}
It is well known that the implicit Euler (IE) method 
\[
\frac{y_{k+1}-y_k}{\alpha} = Gy_{k+1}
\]
is unconditionally $A$-stable. Indeed, $E_{\rm IE} ( \alpha, G) = (I - \alpha G)^{-1}$ and $\rho(E_{\rm IE} ( \alpha, G))<1$ for all $\alpha>0$ since all eigenvalues of $\alpha G$ lie on the left of 
the complex plane and their distance to $1$ is larger than one. 
Moreover, as it has no restriction on the step size, the implicit Euler method can achieve faster convergent rate by time rescaling which is equivalent to chose a large step size.

The explicit Euler method
\begin{equation}\label{eq:explicit}
	\frac{y_{k+1}-y_k}{\alpha} = Gy_{k}
\end{equation}
is only conditionally $A$-stable. Let us consider the case $G=-A$ with $\mu>0$. Then~\eqref{eq:explicit} is exactly the gradient descent method for minimizing $\frac{1}{2}x^{\top}Ax$. It is not hard to obtain that
\begin{equation}\label{eq:rho-GD}
	\rho(E_{\rm GD}(\alpha, -A) ) 
	= \rho(I-\alpha A) =
	\max\big\{
	\snm{1-\alpha\mu},
	\,
	\snm{1-\alpha   L}
	\big\}.
\end{equation}
Hence $\rho(E_{\rm GD}(\alpha, -A) ) <1$ provided 
$0<\alpha<2/  L$. Thanks to the symmetry of $A$, we have
$\| E_{\rm GD}(\alpha, -A) \|_2 = \rho(E_{\rm GD}(\alpha, -A) ) $ 
and the norm convergence with linear rate follows.
Moreover, based on~\eqref{eq:rho-GD}, 
a standard argument outputs  the optimal 
choice $\alpha^*  = 2/(\mu+  L)$, 
which gives the minimal spectrum
\begin{equation}\label{eq:conv-opt}
	\| E_{\rm GD}(\alpha^*, -A) \|_2=
	\min_{\alpha >0}
	\rho(I-\alpha A ) 
	=\frac{\kappa(A) -1}{\kappa(A) + 1}.
\end{equation}
A quasi-optimal but simpler choice is $\alpha_*  = 1/ L$ which yields
\begin{equation}\label{eq:conv-qopt}
	\| E_{\rm GD}(\alpha_*, -A) \|_2 
	= \rho(I-\alpha_* A)
	= 1 - \frac{1}{\kappa(A)}.
\end{equation}

We formulate the convergence rates~\eqref{eq:conv-opt} and~\eqref{eq:conv-qopt} in terms of the condition number $\kappa(A)$ as it is invariant to the rescaling of $A$, i.e., $\kappa(cA) = \kappa (A)$ for any real number $c\neq 0$. To be $A$-stable, one has to choose $0<\alpha<2/\lambda_{\max}(A)$. It seems that a simple rescaling to $cA$ can reduce $\lambda_{\max}(cA)$ and thus enlarge the range of the step size. However, the condition number $\kappa(cA) = \kappa (A)$ is invariant. From this we see that for the GD method~\eqref{eq:explicit}, the simple rescaling $cA$ is in vain. 

The magnitude of the step size is relative to $\min |\lambda(G)|$. To fix the discussion, we chose $G = - A/\mu$ in~\eqref{eq:explicit} so that $\lambda_{\min}(A/\mu) = 1$.  Then in order for the explicit Euler method to be $A$-stable it is equivalent to choose $\alpha = O(1/\kappa(A))$ which leads to the contraction rate $1-1/\kappa(A)$. 
Consequently for ill-conditioned problems, tiny step size proportional to $1/\kappa(A)$ is required.

Rather than the rescaling, our main intuition is to seek some transformation $G$ of $A$, that keeps $\min |\lambda(G)| = 1$ and reduces $\kappa(A)$ to $\kappa(G)=O(\sqrt{\kappa(A)})$. 
We wish to construct explicit $A$-stable methods which can enlarge the step size from $O(1/\kappa(A))$ to $O(1/\sqrt{\kappa(A)})$ and consequently improve the contraction rate from $1 - 1/\kappa(A)$ to $O(1 - 1/\sqrt{\kappa(A)})$. 

\subsection{Transformation to the complex plane}		
\label{sec:ODE}					
Let us first consider the case $\mu>0$ and embed $A$ into some $2\times 2$ block matrix $G$ with a rotation built-in. 
Specifically, we construct two candidates
\begin{equation}\label{eq:G-NAG}
	G_{_{\rm HB}} = 
	\begin{pmatrix}
		0 & I\\
		-A/\mu &\quad - 2I
	\end{pmatrix}
	\quad\text{and}\quad
	G_{_{\rm NAG}} = 
	\begin{pmatrix}
		-I & I\\
		I-A/\mu  &\quad -I
	\end{pmatrix}.
\end{equation}
Due to the asymmetrical fact, $\sigma(A)$ will be transformed from the real line to the complex plane. This may shrink the condition number; see the following result.
\begin{proposition}\label{prop: 4G}
	For $G=G_{_{\rm HB}}$ or $G_{_{\rm NAG}}$ given in~\eqref{eq:G-NAG}, it satisfies $\mathfrak{Re}(\lambda) < 0$ for any $\lambda \in \sigma(G)$, which promises the decay property $\nm{y(t)}_2\to0\,$ for the system $y'=Gy$. Moreover, we have
	$\kappa(G_{_{\rm HB}}) = 
	\kappa(G_{_{\rm NAG}}) = \sqrt{\kappa(A)}$.
\end{proposition}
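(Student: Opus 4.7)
The plan is to reduce the spectral analysis of both $2d\times 2d$ matrices to a family of $2\times 2$ problems indexed by the eigenvalues of $A$. Since $A$ is symmetric with $\sigma(A)\subset[\mu,L]$ and $\mu>0$, I would write $A=Q\Lambda Q^\top$ with orthogonal $Q$ and $\Lambda=\mathrm{diag}(\lambda_1,\ldots,\lambda_d)$. Applying the orthogonal similarity by $P=\mathrm{diag}(Q,Q)$, followed by the permutation that pairs the $i$-th coordinate of the two $d$-blocks, simultaneously block-diagonalizes both matrices into $2\times 2$ blocks
$$B_i^{\rm HB}=\begin{pmatrix} 0 & 1 \\ -\lambda_i/\mu & -2 \end{pmatrix},\qquad B_i^{\rm NAG}=\begin{pmatrix} -1 & 1 \\ 1-\lambda_i/\mu & -1 \end{pmatrix},\quad i=1,\ldots,d,$$
so that $\sigma(G_{_{\rm HB}})=\bigcup_i\sigma(B_i^{\rm HB})$ and analogously for $G_{_{\rm NAG}}$.

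Next, a direct expansion of $\det(zI-B_i)$ shows in both cases the same characteristic polynomial $z^2+2z+\lambda_i/\mu=0$, whose roots are
$$z_i^{\pm}=-1\pm\sqrt{1-\lambda_i/\mu}=-1\pm\mathrm{i}\sqrt{\lambda_i/\mu-1},$$
where the last equality uses $\lambda_i/\mu\geqslant 1$. This immediately yields $\mathfrak{Re}(z_i^{\pm})=-1<0$ for every eigenvalue, which, combined with the linear-system decay result cited at the start of Section \ref{sec:stab}, gives $\nm{y(t)}_2\to 0$.

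For the condition number, note that $|z_i^{\pm}|^2=1+(\lambda_i/\mu-1)=\lambda_i/\mu$, so
$$\rho(G)=\max_i\sqrt{\lambda_i/\mu}=\sqrt{L/\mu},\qquad \min_{\lambda\in\sigma(G)}|\lambda|=\sqrt{\mu/\mu}=1,$$
whence $\rho(G^{-1})=1$ and $\kappa(G)=\rho(G^{-1})\rho(G)=\sqrt{L/\mu}=\sqrt{\kappa(A)}$ for both choices of $G$. In particular, $0\notin\sigma(G)$, so $G$ is invertible and the condition number is well-defined.

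The only real obstacle is bookkeeping: writing out the orthogonal-plus-permutation similarity carefully enough that the reader sees why the spectrum is exactly the union of the $2\times 2$ spectra, and flagging the mild algebraic coincidence that $B_i^{\rm HB}$ and $B_i^{\rm NAG}$ share the same characteristic polynomial (this is what makes the unified statement $\kappa(G_{_{\rm HB}})=\kappa(G_{_{\rm NAG}})=\sqrt{\kappa(A)}$ possible). Once these two points are nailed down, the rest is the quadratic formula.
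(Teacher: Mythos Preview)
Your proposal is correct and follows essentially the same route as the paper: diagonalize $A$, conjugate by the block-orthogonal matrix to reduce to a family of $2\times 2$ blocks, and read off the eigenvalues. The only cosmetic difference is that the paper handles $G_{_{\rm NAG}}$ by exhibiting an explicit $2\times 2$ similarity $R_{_{\rm NAG}}=PR_{_{\rm HB}}P^{-1}$ rather than recomputing the characteristic polynomial, but this is the same argument in slightly different dress.
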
			
\begin{proof}
	Let us first consider $G=G_{_{\rm HB}}$. As $A$ is symmetric, we can write $A = U\Lambda U^{\top}$ with unitary matrix $U$ and diagonal matrix $\Lambda$ consisting of eigenvalues of $A$. By applying the similar transform to $G$ with the block diagonal matrix ${\rm diag}(U, U)$, it suffices to consider eigenvalues of
	\[
	R_{_{\rm HB}}=  \begin{pmatrix}
		0 & 1\\
		-\theta& \quad -2
	\end{pmatrix}, \quad \theta\in \sigma(A/\mu).
	\]
	It is clear that $\det R_{_{\rm HB}}   = \theta$ and ${\rm tr}\, R_{_{\rm HB}}=-2<0$. In addition, since $\snm{{\rm tr}\,R_{_{\rm HB}} }^2\leqslant {}4\det R_{_{\rm HB}}$, any eigenvalue $\lambda_R\in \sigma(R_{_{\rm HB}})$ is a complex number and 
	\[
	\mathfrak{Re}(\lambda_R) {}=-1,\quad 
	\snm{\lambda_R} = {}\sqrt{\det R_{_{\rm HB}}}=\sqrt{\theta}.
	\]		
	As $1 = \lambda_{\min}(A/\mu)\leqslant
	\theta\leqslant 
	\lambda_{\max}(A/\mu) = \kappa(A),$
	we conclude $\kappa(G_{_{\rm HB}}) = \sqrt{\kappa(A)}$. 
	
	Apply the similar transformation with	$P = 
	\begin{pmatrix}
		1&\; 0\\1&\; 1
	\end{pmatrix},$
	we observe that
	\[
	R_{_{\rm NAG}}  = PR_{_{\rm HB}}P^{-1} = 
	\begin{pmatrix}
		-1 & 1\\
		1-\theta  &\quad -1
	\end{pmatrix}.	
	\]
	So $\sigma(R_{_{\rm NAG}} ) = \sigma(R_{_{\rm HB}})$ and consequently $\kappa(G_{_{\rm NAG}}) = \sqrt{\kappa(A)}$. This completes the proof of this proposition.
	 \end{proof}
\medskip

We write $y = (x, v)^{\top}$ and eliminate $v$ in $y'=Gy$ to get a second order ODE of $x$, in which we replace $Ax$ by general form $\nabla f(x)$. Both $G_{_{\rm HB}}$ and $G_{_{\rm NAG}} $ yield the same ODE
\begin{equation}\label{eq:ode-HB-NAG}
	\mu x'' + 2\mu x' + \nabla f(x) = 0,
\end{equation}	
which is a special case of the HB model 
(cf.~\eqref{eq:heavy ball-ode}).

Note that we can find a lot of transformations $G$ and derive corresponding ODE models. Indeed, given any $G$ that meets our demand, both $cG$ and $QGQ^{-1}$ are acceptable candidates, where $c>0$ and $Q$ is some invertible matrix. We are not going further deep beyond those two transformations given in \eqref{eq:G-NAG} for the strongly convex case $\mu>0$ but aim to combine the transformation with refined time scaling to propose another one for convex case $\mu=0$ in Section \ref{sec:convx}.  
\subsection{Acceleration from a Gauss-Seidel splitting}					
\label{sec:GS}
We now consider numerical discretization for \eqref{eq:G-ODE} with $G=G_{_{\rm HB}}$ and $G_{_{\rm NAG}}$ given in~\eqref{eq:G-NAG}. As discussed in Section \ref{sec:GD}, the implicit Euler method is unconditional $A$-stable. But computing $(I - \alpha G)^{-1}$ needs significant effort and may not be practical. 

One may hope that the explicit Euler method $y_{k+1} = (I + \alpha G) y_k$ will be $A$-stable with step size $\alpha = O(1/\kappa(G))= O(1/\sqrt{\kappa(A)})$. Unfortunately, unlike the discussion for \eqref{eq:explicit} with $G=-A$, where $\sigma(I-\alpha A)$ lies on the real line and $\rho(I-\alpha A)$ can be easily shrunk by choosing $\alpha = 1/\rho(A)$ (cf. \eqref{eq:rho-GD}), the general asymmetric $G$ spreads the spectrum on the complex plane. For both $G=G_{_{\rm HB}}$ and $G=G_{_{\rm NAG}}$, we have $\Re(\lambda) = -1$ for all $\lambda\in\sigma(G)$. Denote by $r = \rho(G)$. Then $\rho^2(I + \alpha G) = (1-\alpha)^2 + \alpha^2 (r^2-1)$. To be $A$-stable, requiring $\rho(I + \alpha G) < 1$ is equivalent to letting $0< \alpha < 2/r^2 = O(1/\kappa(A))$, where small step size $\alpha = O(1/\kappa(A))$ is still needed. The optimal choice $\alpha^*=r^{-2}$ only gives 
\[
\rho(I+\alpha^*G) = 1-\alpha^* = 1-O(1/\kappa(A)),
\]
where no acceleration has been obtained.

We then expect that an explicit scheme closer to the 
implicit Euler method will hopefully have better stability with larger step size. 
Motivated by the Gauss-Seidel (GS) method~\cite{saad_iterative_2003} for computing $(I - \alpha G)^{-1}$, we consider the matrix splitting $G = M + N$ with $M $ being the lower triangular part of $G$ (including the diagonal part) and $N = G - M$, and propose the following Gauss-Seidel splitting scheme
\begin{equation}\label{eq:GS}
	\frac{y_{k+1} - y_{k}}{\alpha} = M y_{k+1} + N y_{k}
\end{equation}
which gives the relation
\begin{equation}\label{eq:EalphaG}
	y_{k+1} = E(\alpha, G) y_{k},\quad 
	E(\alpha, G) :=(I - \alpha M)^{-1} (I + \alpha N).
\end{equation}
Note that for $G=G_{_{\rm HB}}$ and $G_{_{\rm NAG}}$, the scheme~\eqref{eq:GS} is still explicit as the lower triangular block matrix $I - \alpha M$ can be inverted easily, without involving $A^{-1}$. 

The spectrum bound is given below and for the algebraic proof details, we refer to Appendix \ref{app:proof}.
\begin{theorem}\label{thm:spec-bd-GS}
	For $G = G_{_{\rm HB}}$ or $G_{_{\rm NAG}}$ 
	given in~\eqref{eq:G-NAG}, if $				0< \alpha  \leqslant  2/\sqrt{\kappa(A)}$, then the Gauss-Seidel splitting 
	scheme~\eqref{eq:GS} is $A$-stable and 
	\[
	\rho(E(\alpha,G)) \leqslant   \frac{1}{\sqrt{1+2\alpha}}.
	\]
\end{theorem}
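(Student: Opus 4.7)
The plan is to exploit the symmetry of $A$ in order to block-diagonalize $G$, and then analyze the resulting $2\times 2$ blocks one at a time via their characteristic polynomial, mirroring the strategy already used in \cref{prop: 4G}.

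Since $A$ is symmetric we write $A = U\Lambda U^{\top}$ with $U$ orthogonal and $\Lambda$ diagonal. Applying the block-diagonal similarity transform $\mathrm{diag}(U,U)$ to $G=G_{_{\rm HB}}$ or $G_{_{\rm NAG}}$ reduces it to block-diagonal form whose $2\times 2$ blocks $G_\theta$ are parametrized by $\theta \in \sigma(A/\mu) \subset [1,\kappa(A)]$. The key observation is that the same similarity respects the Gauss-Seidel splitting $G=M+N$, because $M$ and $N$ are characterized by their lower/upper triangular block structure which is preserved under ${\rm diag}(U,U)$. Consequently $E(\alpha,G)$ also decouples into $2\times 2$ blocks $E_\theta = (I_2-\alpha M_\theta)^{-1}(I_2+\alpha N_\theta)$, and
\[
\rho(E(\alpha,G)) = \max_{\theta\in\sigma(A/\mu)} \rho(E_\theta).
\]

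For each such block I would compute trace and determinant in closed form. A short calculation using $\det(I+\alpha N_\theta)=1$ and the triangular structure of $I-\alpha M_\theta$ gives $\det E_\theta = 1/(1+2\alpha)$ for $G_{_{\rm HB}}$ and $\det E_\theta = 1/(1+\alpha)^2$ for $G_{_{\rm NAG}}$, both independent of $\theta$ and both bounded by $1/(1+2\alpha)$. If the discriminant of the characteristic polynomial $\lambda^2 - (\mathrm{tr}\,E_\theta)\lambda + \det E_\theta = 0$ is negative, then its two roots are complex conjugates and $|\lambda|^2 = \det E_\theta$, which already yields the claimed bound $\rho(E_\theta) \leq 1/\sqrt{1+2\alpha}$.

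The remaining and main step is therefore to verify that $(\mathrm{tr}\,E_\theta)^2 < 4\det E_\theta$ holds \emph{uniformly} for all $\theta \in [1,\kappa(A)]$ whenever $0 < \alpha \leq 2/\sqrt{\kappa(A)}$. Substituting the closed-form expressions for trace and determinant, this reduces to an elementary polynomial inequality in $\alpha$ and $\theta$; the worst case occurs at $\theta = \kappa(A)$, and the threshold $2/\sqrt{\kappa(A)}$ emerges naturally as (the positive portion of) the root of the resulting quadratic in $\alpha$. I expect this algebraic verification to be the main obstacle: each individual step is straightforward, but the book-keeping is delicate enough that the paper defers it to an appendix. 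Finally, to obtain the second bound, I would plug $\alpha = 2/\sqrt{\kappa(A)}$ into $1/\sqrt{1+2\alpha}$ and simplify: the inequality $(1+1/\sqrt{\kappa(A)})^2 \leq 1+4/\sqrt{\kappa(A)}$ reduces to $1 \leq 2\sqrt{\kappa(A)}$, which is automatic since $\kappa(A)\geq 1$, thereby giving $\rho(E(\alpha,G)) \leq 1/(1+1/\sqrt{\kappa(A)})$.
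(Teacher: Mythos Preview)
Your proposal is correct and follows essentially the same route as the paper's proof in Appendix~\ref{app:proof}: block-diagonalize via ${\rm diag}(U,U)$, reduce to the $2\times 2$ blocks, compute the determinant (which is independent of $\theta$), and verify the discriminant condition so that the eigenvalues are complex conjugates with $|\lambda|^2=\det E_\theta$. The only organizational difference is that the paper first isolates a general Lemma~\ref{lem:eig-R} for a generic $2\times 2$ matrix $R=\begin{pmatrix}-a&c\\-b&-d\end{pmatrix}$, deriving the clean range $|{\rm tr}\,R|-2\sqrt{\det R}\leqslant bc\,\alpha\leqslant |{\rm tr}\,R|+2\sqrt{\det R}$ for the discriminant to be nonpositive, and then specializes to $G_{_{\rm HB}}$ and $G_{_{\rm NAG}}$; your plan does the two cases directly, which is equally valid.
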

\subsection{Dynamic time rescaling for the convex case} 
\label{sec:convx}
The ODE model~\eqref{eq:ode-HB-NAG} given in Section \ref{sec:ODE} cannot treat the case $\mu=0$ and the previous spectral analysis also fails. Equivalently the condition number $\kappa(A)$ is infinity and the spectrum bound becomes $1$. To conquer this, a careful rescaling is needed. Throughout this subsection, we assume $\mu = 0$. 

For the gradient flow
\begin{equation}
	\label{eq:gd-flow}
	x'(t) = -\nabla f(x(t)),
\end{equation}
one can easily establish the sub-linear rate $f(x(t))\leqslant C/t$; see~\cite{Su;Boyd;Candes:2016differential}.
To recover the exponential rate, we introduce a time rescaling $t(s) =e^{s}$ and let $y(s)=x(t(s))$. Then~\eqref{eq:gd-flow} becomes the rescaled gradient flow
\begin{equation}\label{eq:re-gd-flow}
	\gamma(s) y'(s) = -\nabla f(y(s)),
\end{equation}
with the scaling factor $\gamma(s)=e^s$.
Besides, the previous sublinear rate $f(x(t))\leqslant C/t$ turns into $f(y(s)) \leqslant Ce^{-s}$.
That is in the continuous level, we can achieve exponential decay 
through suitable rescaling of time even for convex case $\mu =0$ .

Now let us go back to our model case $f(x) = \frac{1}{2}x^{\top}Ax$ with $\mu=0$ and $\lambda_{\max}(A) = L$. Coupled with the transformation $G_{_{\rm NAG}} $, we consider 
\begin{equation}\label{eq:Gy}
	y' = G(\gamma) \, y,\quad G(\gamma) = 
	\begin{pmatrix}
		- I & \quad I \\
		-A/\gamma & \quad O
	\end{pmatrix},
\end{equation}
where $y = (x, v)^{\top}$ and 
\begin{equation}\label{eq:ode-gama}
	\gamma' = -\gamma, \quad \gamma(0)=\gamma_0>0.
\end{equation}
This gives a second-order ODE in terms of $x$:
\begin{equation}\label{eq:nag-ODE}
	\gamma  x'' + \gamma x'  + \nabla f(x)=0,
\end{equation}
which is in the HB type but with variable damping coefficients.

Obviously, the implicit Euler method for solving~\eqref{eq:Gy} is still unconditional $A$-stable. 
We now apply the GS splitting~\eqref{eq:GS} to~\eqref{eq:Gy} 
and get 
\begin{equation}\label{eq:split-1}
	y_{k+1} = {}E(\alpha_{k}, G(\gamma_{k+1})) y_{k},
\end{equation}
where $E(\alpha_{k}, G(\gamma_{k+1}))$ is defined in~\eqref{eq:EalphaG}.
The equation \eqref{eq:ode-gama} is discretized by that
\begin{equation}\label{eq:im-gk}
	\gamma_{k+1}={}\gamma_{k}-\alpha_{k}\gamma_{k+1}.
\end{equation}
Eliminating $v_{k}$ in~\eqref{eq:split-1} will 
give an HB method with variable coefficients 
$$
x_{k+1} = x_{k} -\frac{\alpha_k\alpha_{k-1}}
{\gamma_k+\alpha_k\gamma_k}\nabla f(x_{k})  + \frac{\alpha_k}{\alpha_{k-1}+\alpha_k
	\alpha_{k-1}} (x_{k} - x_{k-1}). 
$$

Instead of studying the spectrum bound 
$E(\alpha_k,G(\gamma_{k+1}))$ which is $1$, we 
apply the scaling technique to obtain a regularized 
matrix
\[
\widetilde{E}_{k} =
\begin{pmatrix}
	I & \;  O \\
	O & \; \gamma_{k+1} I
\end{pmatrix}
E(\alpha_{k}, G(\gamma_{k+1}))
\begin{pmatrix}
	I & \; O \\
	O & \;  \gamma_{k} I
\end{pmatrix}^{-1},
\]
which is nearly similar with $E(\alpha_k,G(\gamma_{k+1}))$. 
Set $z_k=\begin{pmatrix}
	I & \; O \\
	O & \; \gamma_{k} I
\end{pmatrix}
y_{k}
$, then the discrete system~\eqref{eq:split-1} 
for $\{y_k\}$ becomes
\begin{equation}\label{eq:zk}
	z_{k+1}=\widetilde{E}_{k}z_k,
\end{equation}

With a careful chosen step size, the spectrum bound of $\widetilde{E}_{k}$ is given below and for the algebraic proof details, we refer to Appendix \ref{app:proof}. We note that, the step size choice in Theorem \ref{thm:spec-mu-0} is only to agree with the setting of Lemma \ref{lem:gk-ak-im} and for general choice $L\alpha_k^2/\gamma_k=O(1)$ and suitable initial value $\gamma_0$, it is possible to maintain the spectrum bound~\eqref{eq:rho} together with the decay estimate~\eqref{eq:rho-prod}.
\begin{theorem}\label{thm:spec-mu-0}
	If $\gamma_0=L$ and $L\alpha_{k}^2 =\gamma_{k}(1+\alpha_k)$, then both the 
	scheme~\eqref{eq:split-1} and its equivalent form~\eqref{eq:zk} are $A$-stable 
	and we have
	\begin{equation}\label{eq:rho}
		\rho (\widetilde E_{k}) =\frac{\gamma_{k+1}}{\gamma_{k}}
		=\frac{1}{1+\alpha_{k}},
	\end{equation}
	which further implies that
	\begin{equation}\label{eq:rho-prod}
		\prod_{i=0}^{k-1}\rho(\widetilde{E}_i) 
		= \frac{\gamma_{k}}{\gamma_0} = O(k^{-2}).
	\end{equation}
\end{theorem}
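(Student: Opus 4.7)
The plan is to write down $\widetilde E_k$ explicitly, reduce the eigenvalue computation to scalar $2\times 2$ blocks via the spectral decomposition of $A$, extract $\rho(\widetilde E_k)$ from a clean determinant identity, and finally bound the telescoping product by $O(1/k^2)$ through a discrete comparison argument.

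First I would compute $\widetilde E_k$ in closed form. With $M$ the block lower-triangular part of $G(\gamma)$ and $N=G(\gamma)-M$, the matrix $I-\alpha M$ is block lower-triangular and inverts by a single block formula; multiplying by $I+\alpha N$ gives $E(\alpha_k,G(\gamma_{k+1}))$ explicitly. Conjugating by $\mathrm{diag}(I,\gamma_{k+1}I)\cdot\mathrm{diag}(I,\gamma_kI)^{-1}$ and using the implicit Euler identity $\gamma_k=\gamma_{k+1}(1+\alpha_k)$ coming from \eqref{eq:im-gk}, both diagonal blocks of $\widetilde E_k$ end up carrying the common factor $1/(1+\alpha_k)$.

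Next, since $A$ is symmetric I would diagonalise $A=U\Lambda U^{\top}$ and apply the block similarity $\mathrm{diag}(U,U)$, which decouples $\sigma(\widetilde E_k)$ into the spectra of scalar $2\times 2$ matrices $R_k(\theta)$ indexed by $\theta\in\sigma(A)$. A direct computation gives the crucial cancellation $\det R_k(\theta)=1/(1+\alpha_k)^2$, \emph{independent of $\theta$}. The trace equals $\tfrac{2}{1+\alpha_k}-\tfrac{\alpha_k^2\theta}{(1+\alpha_k)\gamma_k}$, so the condition $(\mathrm{tr}\,R_k)^2<4\det R_k$ for complex eigenvalues reduces to $0<\alpha_k^2\theta/\gamma_k<4$. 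Invoking $L\alpha_k^2=\gamma_{k+1}$ together with $\theta\leqslant L$ bounds this ratio by $\gamma_{k+1}/\gamma_k=1/(1+\alpha_k)<1$, so for $\theta>0$ the eigenvalues are complex conjugates of modulus $\sqrt{\det R_k(\theta)}=1/(1+\alpha_k)$, while for $\theta=0$ the matrix $R_k(0)$ is upper triangular with both diagonal entries equal to $1/(1+\alpha_k)$. This yields both the identity \eqref{eq:rho} and $A$-stability.

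For \eqref{eq:rho-prod}, the telescoping $\prod_{i=0}^{k-1}\gamma_{i+1}/\gamma_i=\gamma_k/\gamma_0$ is immediate. To obtain $\gamma_k=O(1/k^2)$, I would eliminate $\alpha_k$ between $\gamma_k-\gamma_{k+1}=\alpha_k\gamma_{k+1}$ and $L\alpha_k^2=\gamma_{k+1}$ to derive the clean relation $\sqrt{L}(\gamma_k-\gamma_{k+1})=\gamma_{k+1}^{3/2}$, which is the discrete analogue of $\sqrt{L}\gamma'=-\gamma^{3/2}$ whose solutions decay like $1/s^2$. Setting $\beta_k=1/\sqrt{\gamma_k}$ and using $\sqrt{\gamma_k}+\sqrt{\gamma_{k+1}}\leqslant 2\sqrt{\gamma_k}$ together with the uniform bound $\alpha_k\leqslant\sqrt{\gamma_0/L}$ yields $\beta_{k+1}-\beta_k\geqslant c>0$, and summation gives $\gamma_k=O(1/k^2)$. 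The only delicate step in the whole argument is the determinantal cancellation that makes $\det R_k(\theta)$ $\theta$-free; everything else is block algebra and a textbook-style discrete comparison.
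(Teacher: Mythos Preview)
Your proposal is correct and follows essentially the same route as the paper: the paper first conjugates $\widetilde E_k$ to $H_k/(1+\alpha_k)$ and shows $\rho(H_k)=1$ via the same $\theta$-independent determinant and discriminant check $0\leqslant \lambda\alpha_k^2/\gamma_k\leqslant 1$, while for the $O(1/k^2)$ bound it also proves that $1/\sqrt{\gamma_{k+1}}-1/\sqrt{\gamma_k}\geqslant 1/(2\sqrt{L}+2\sqrt{\gamma_0})$ and sums. The only cosmetic difference is that the paper pulls out the scalar factor $1/(1+\alpha_k)$ before diagonalising, whereas you keep it inside $R_k(\theta)$; both arguments hinge on exactly the determinantal cancellation you identified.
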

\subsection{Limitation of spectral analysis}
\label{sec:limit}
For quadratic objective $f$, both the ODE models~\eqref{eq:ode-HB-NAG} 
and~\eqref{eq:nag-ODE} are linear and the spectrum bound of $E(\alpha,G)$ for the Gauss-Seidel splitting~\eqref{eq:EalphaG} is derived.
But as pointed out in the beginning, for $A$-stable methods, bounding the spectral radius by one is not sufficient 
for the norm convergence if the matrix $E(\alpha,G)$ is non-normal; 
see convincible examples 
in~\cite[Appendix D.2]{LeVeque:2007Finite} and~\cite[Appendix D.4]{LeVeque:2007Finite}.

Moving beyond quadratic $f$ and nonlinear ODE systems, transient growth or instability of perturbed problems can easily lead to nonlinear instabilities. Particularly, for the HB system \eqref{eq:heavy ball-ode}, it is shown in~\cite{LessardRechtPackard2016} that the parameters optimized for linear ODE models does not guarantee the global convergence for a nonlinear system.

To provide rigorous convergence analysis for both continuous and discrete levels,  in the sequel we shall introduce the tool of Lyapunov function. Following many related works~\cite{Attouch_2016-fast,Su;Boyd;Candes:2016differential,Wibisono;Wilson;Jordan:2016variational}, we first analyze some proper ODEs via a Lyapunov function, then construct optimization algorithms from numerical discretizations of continuous models and use a discrete Lyapunov function to establish the convergence rates of the proposed algorithms. 
\section{Nesterov Accelerated Gradient Flow}
\label{sec:acceleratedflow}
\subsection{Continuous problem}\label{sec:NAG-ode}
In the previous section, we have obtained two ODE models for 
quadratic objective $f(x) = \frac12x^{\top}Ax$ with $\mu > 0$ and $\mu = 0$, respectively.
To handle those two cases in a unified way, we combine $G_{_{\rm NAG}}$ in~\eqref{eq:G-NAG} with $G(\gamma)$ in~\eqref{eq:Gy} and consider a new transformation 
\begin{equation}\label{eq:G}
	G = 
	\begin{pmatrix}
		- I & \quad I \\
		\mu/\gamma -A/\gamma & \quad -\mu/\gamma \, I
	\end{pmatrix},
\end{equation}
where 	
\begin{equation}\label{eq:gammat-NAG}
	\gamma' = \mu-\gamma,\quad\gamma(0)=\gamma_0>0.
\end{equation}
One can solve the above equation and obtain
\[
\gamma(t) =
\mu + (\gamma_0-\mu)
{\mathrm e}^{-t},
\quad t\geqslant 0.
\]
Since $\gamma_0>0$, we have that 
$\gamma(t)>0$ for all $t\geqslant 0$ and 
$\gamma(t)$ converges to $\mu$ 
exponentially and monotonically as $t \to +\infty$. In particular, if $\gamma_0=\mu>0$, then $\gamma(t)=\mu$. 
Therefore, when $\mu=0$, ~\eqref{eq:G} reduces to~\eqref{eq:Gy} and when $\gamma_0=\mu>0$, ~\eqref{eq:G} recovers 
\eqref{eq:G-NAG} indeed. Correspondingly, the transform~\eqref{eq:G} gives 
the system
\begin{equation}\label{eq:NAG-quad}
	\left\{
	\begin{aligned}
		x'={}& v - x,\\
		\gamma v'={}&\mu (x - v ) - 
		Ax.
	\end{aligned}
	\right.
\end{equation}

Heuristically, for general $f\in\mathcal S_\mu^1$ with $\mu\geqslant 0$, 
we just replace $Ax$ in~\eqref{eq:NAG-quad} 
with $\nabla f(x)$ and obtain our NAG flow
\begin{equation}\label{eq:ode-sys-NAG}
	\left\{
	\begin{aligned}
		x'={}& v - x,\\
		\gamma v'={}&\mu (x - v ) - 
		\nabla f(x),
	\end{aligned}
	\right.
\end{equation}
with initial conditions $x(0)=x_0$ and $v(0)=v_0$. 
The equivalent second-order ODE (will also be abbreviated as NAG flow) 
reads as follows
\begin{equation}\label{eq:ode-NAG}
	\gamma x''+(\mu+\gamma)x'+\nabla f(x)=0,
\end{equation}
with initial conditions $x(0)=x_0$ and $x'(0)=v_0-x_0$.
Clearly, if $\gamma_0 = \mu>0$, then~\eqref{eq:ode-NAG} becomes~\eqref{eq:ode-HB-NAG}, and if $ \mu=0$, then~\eqref{eq:ode-NAG} 
coincides with~\eqref{eq:nag-ODE}. 

Motivated by~\eqref{eq:Lt-WRJ}, we introduce a
Lyapunov function for~\eqref{eq:ode-sys-NAG}:
\begin{equation}\label{eq:Lt-NAG}
	\mathcal L(t):=
	f(x(t))-f(x^*)+\frac{\gamma(t)}{2}
	\nm{v(t)-x^*}^2,\quad t\geqslant 0.
\end{equation}
In addition, we need the following lemma, which is trivial but 
very useful for the convergence analysis in both of the continuous and discrete levels.
\begin{lemma}\label{lem:cross}
	For any $u,v,w\in V$, we have
	\[
	2(u-v,v-w) = \nm{u-w}^2-\nm{u-v}^2-\nm{v-w}^2.
	\]
\end{lemma}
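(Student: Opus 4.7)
The plan is to prove this three-term identity by a direct expansion of a single squared norm using the bilinearity and symmetry of the inner product. The key observation is that the vector $u-w$ telescopes as $(u-v)+(v-w)$, so the identity is essentially the parallelogram/polarization-type expansion of $\nm{u-w}^2$.

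First I would write
\[
u-w = (u-v) + (v-w),
\]
and then apply the definition $\nm{\cdot}^2 = (\cdot,\cdot)$ together with bilinearity:
\[
\nm{u-w}^2 = \nm{u-v}^2 + 2(u-v,\,v-w) + \nm{v-w}^2.
\]
Rearranging this equality isolates $2(u-v,v-w)$ on one side and yields exactly the claim. No hypothesis on $u,v,w$ beyond membership in the Hilbert space $V$ is used, and no convexity or regularity of $f$ enters at this stage.

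There is no real obstacle here; the statement is a one-line algebraic identity for inner-product spaces, used later as a bookkeeping device to handle cross terms of the form $(x_{k+1}-x_k, x_k - x^*)$ that will arise when differentiating or differencing the Lyapunov function $\mathcal L$ in \eqref{eq:Lt-NAG}. The only thing to be careful about is to state it with the correct signs, since a sign error here would propagate into every subsequent estimate on $\mathcal L$; so I would double-check by substituting $w = u$ (giving $0 = \nm{u-v}^2 - \nm{u-v}^2 - 0$, consistent) and $w = v$ (giving $0 = \nm{u-v}^2 - \nm{u-v}^2 - 0$, consistent) as sanity checks before moving on.
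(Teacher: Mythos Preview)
Your proof is correct and is exactly the natural one-line expansion; the paper in fact does not give a proof at all, simply labeling the lemma ``trivial,'' so there is nothing to compare against.

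One small slip worth fixing: your first sanity check with $w=u$ is miscomputed. With $w=u$ the left-hand side is $2(u-v,v-u)=-2\nm{u-v}^2$, and the right-hand side is $\nm{u-u}^2-\nm{u-v}^2-\nm{v-u}^2=-2\nm{u-v}^2$, not ``$0=\nm{u-v}^2-\nm{u-v}^2-0$''. The identity still checks out, but the verification you wrote does not match it; the second sanity check ($w=v$) is fine.
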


We first present the well-posedness of~\eqref{eq:ode-NAG} 
and prove the exponential decay property of the Lyapunov function~\eqref{eq:Lt-NAG}.
\begin{lemma}\label{lem:conv-ode-NAG}
	If $f\in\mathcal {S}_{\mu,L}^{1,1}$ with $\mu\geqslant 0$, 
	then the NAG flow~\eqref{eq:ode-NAG} 
	admits a unique solution 
	$x\in C^2([0,\infty); V)$ and moreover
	\begin{equation}\label{eq:dLt}
		\mathcal L'(t)\leqslant
		-\mathcal L(t) - 
		\frac{\mu}{2}
		\|x'(t)\|^{2},
	\end{equation}
	which implies that
	\begin{equation}
		\label{eq:conv-ode-NAG}
		\mathcal L(t)+\frac{\mu}{2}
		\int_{0}^{t}e^{s-t}\nm{x'(s)}^2{\rm d }s
		\leqslant {\mathrm e}^{-t}
		\mathcal L(0),\quad t\geqslant 0.
	\end{equation}
\end{lemma}
\begin{proof}	
	Basically, as $\nabla f$ is Lipschitz continuous, applying 
	the standard existence and uniqueness results of 
	ODE (see~\cite[Theorem 4.1.4]{Ahmad2015}) yields the fact that 
	the system~\eqref{eq:ode-sys-NAG} admits a unique classical solution 
	$(x,v)\in C^1([0,\infty); V)\times C^1([0,\infty); V)$. 
	This implies that $x' =v-x\in C^1([0,\infty); V)$, and therefore 
	$x\in C^2([0,\infty); V)$ is also the unique solution to 
	our NAG flow~\eqref{eq:ode-NAG}.
	
	It remains to prove~\eqref{eq:dLt}, which yields the exponential decay~\eqref{eq:conv-ode-NAG} immediately. A straightforward calculation yields that
	\[
	\mathcal L'(t)={} \dual{\nabla f(x),x'}+
	\frac{\gamma'}{2}\nm{v-x^*}^2+
	\gamma\dual{v', v-x^*},
	\]
	and by~\eqref{eq:gammat-NAG} and~\eqref{eq:ode-sys-NAG}, we replace 
	$\gamma'$ and $v'$ by their right hand side terms and obtain
	\begin{align}
		\mathcal L'(t)= \dual{\nabla f(x),x'}+
		\frac{  \mu-\gamma}{2}\nm{v-x^*}^2+
		\dual{\mu ( x-v)-
			\nabla f(x),v-x^*}.
		\label{eq:key-step}
	\end{align}
	Let us focus on the last term. Thanks to Lemma \ref{lem:cross},
	\begin{equation*}
		\mu(x-v, v-x^{*}) =
		\frac{\mu}{2}\left(			\left\|x-x^{*}\right\|^{2}
		-\|x-v\|^{2}-
		\left\|v-x^{*}\right\|^{2}\right),
	\end{equation*}
	and the gradient term is split as follows
	\begin{equation}\label{cross}
		-\dual{\nabla f(x), v-x^*} =- \dual{ \nabla f(x), v-x} 
		-\dual{\nabla f(x), x-x^*}.
	\end{equation}
	By the relation $x' = v- x$, the first term in 
	\eqref{cross} becomes $\dual{-\nabla f(x), x'}$ which cancels the first term in~\eqref{eq:key-step}. Combining all identities together gives 
	\begin{equation}\label{eq:dL}
		\mathcal L'(t)
		={} \frac{\mu}{2}
		\nm{x-x^{*}}^{2}
		-\dual{\nabla f(x), x-x^*}
		-\frac{ \gamma}{2}\nm{v-x^*}^2-
		\frac{\mu}{2}
		\|x'\|^{2}.
	\end{equation}
	As $f$ is $\mu$-strongly convex (cf.\eqref{eq:convex-mu}), there holds
	\[
	\frac{\mu}{2}
	\nm{x-x^{*}}^{2}
	-\dual{\nabla f(x), x-x^*}
	\leqslant 	f(x^*) - f(x),
	\]
	and plugging this into~\eqref{eq:dL} implies that
	\[
	\mathcal L'(t)\leqslant
	-\mathcal L(t) - 
	\frac{\mu}{2}
	\|x'(t)\|^{2},
	\]
	which proves~\eqref{eq:dLt} and thus completes the proof of this lemma.
	 \end{proof}
\begin{remark}\label{rem:gamma}
	According to the proof of Lemma \ref{lem:conv-ode-NAG}, the equation~\eqref{eq:gammat-NAG} for $\gamma$ can be relaxed to $\gamma' \leqslant\mu-\gamma$. Indeed, this makes~\eqref{eq:key-step} and~\eqref{eq:dL} become inequality but leaves the final estimate~\eqref{eq:dLt} invariant.  
\end{remark}
\subsection{Rescaling property}
\label{sec:NAG-ode-rescale}
Based on our NAG flow~\eqref{eq:ode-sys-NAG} (or~\eqref{eq:ode-NAG}), it is 
possible to use time scaling technique to construct more ODE 
systems with any desirable convergence rate. It is worth distinguishing 
the connection and difference with existing dynamical models.

Specifically, let $\alpha$ be any continuous nonnegative function 
on $\mathbb R_+$, and consider the time rescaling  
\begin{equation}\label{eq:scaling}
	t(\tau)=\int_{0}^{\tau}\alpha(s)\,\mathrm ds,
	\quad\tau>0.
\end{equation}
Set $y(\tau) = x(t(\tau)),w(\tau) = v(t(\tau))$ and $\beta(\tau) = \gamma(t(\tau))$, then it is clear that 
\[
y'(\tau) = t'(\tau)x'(t(\tau)) = \alpha(\tau)x'(t(\tau)),
\]
Similarly, $w'(\tau) = \alpha(\tau) v'(t(\tau))$ and plugging those facts into~\eqref{eq:ode-sys-NAG} gives the scaled NAG flow
\begin{equation}\label{eq:ode-sys-NAG-rescale}
	\left\{
	\begin{aligned}
		{}& 			y' =\alpha (w  - y ),\\
		{}&			\beta w' = \mu\alpha  
		\big(y - w \big) - 
		\alpha \nabla f(y ),
	\end{aligned}
	\right.
\end{equation}
with initial conditions $y(0)=x_0$ and $y'(0)=\alpha(0)x'(0)$. By Remark \ref{rem:gamma}, the equation~\eqref{eq:gammat-NAG} can be replaced by $\gamma'\leqslant \mu-\gamma$, which becomes
\begin{equation}
	\label{eq:gammat-NAG-rescale}
	\beta'\leqslant
	\alpha(\mu-\beta),
	\quad \beta(0)=\gamma_0.
\end{equation}
Correspondingly, the Lyapunov function~\eqref{eq:Lt-NAG} reads as follows
\begin{equation*}
	\widetilde{\mathcal L}(\tau):=
	f(y(\tau))-f(x^*)+\frac{\beta(\tau)}{2}
	\nm{w(\tau)-x^*}^2,\quad \tau\geqslant 0.
\end{equation*}
Analogously to~\eqref{eq:dLt}, we can prove 
\[
\widetilde{\mathcal L}'\leqslant
-\alpha\widetilde{\mathcal L}
- \frac{\mu\alpha}{2}
\|w-y\|^{2},
\]
which implies that
\begin{equation}\label{eq:conv-L-rescale}
	\widetilde{\mathcal L}(\tau)
	\leqslant 
	{\mathrm e}^{-\int_0^\tau\alpha (s)\,{\rm d} s}
	\widetilde{\mathcal L}(0),
	\quad \tau\geqslant 0.
\end{equation}	
Therefore, larger scaling factor $\alpha$ promises faster decay rate.

We note that the scaled NAG flow~\eqref{eq:ode-sys-NAG-rescale} 
is very close to the two models~\eqref{eq:ode-jordan} and 
\eqref{eq:Wilson-ode}, which are derived 
in~\cite{Wibisono;Wilson;Jordan:2016variational} and \cite{Wilson:2018} 
respectively, via the variational perspective. 
Indeed, they differs mainly from the coefficient of $w'$.
By~\eqref{eq:gammat-NAG-rescale}, 
an elementary calculation gives 
\[
\beta(\tau) \leqslant
\mu + (\gamma_0-\mu)
{\mathrm e}^{-\int_0^\tau
	\alpha (s)\,{\rm d} s},
\quad \tau\geqslant 0.
\]
Therefore,~\eqref{eq:ode-sys-NAG-rescale} chooses variable
coefficient $\beta(\tau)$ for $\mu\geqslant 0$, 
while~\eqref{eq:ode-jordan} considers dynamically changing 
coefficient~\eqref{eq:beta-jordan} only for $\mu=0$ 
and~\eqref{eq:Wilson-ode} adopts 
fixed parameter $\mu>0$. For strongly convex case $\mu>0$,
if we take $\beta =\mu$, which satisfies 
\eqref{eq:gammat-NAG-rescale}, then the scaled system
\eqref{eq:ode-sys-NAG-rescale} coincides with 
~\eqref{eq:Wilson-ode}. For convex case $\mu=0$, if both~\eqref{eq:ode-jordan} 
and~\eqref{eq:gammat-NAG-rescale} are equalities, then 
\eqref{eq:ode-sys-NAG-rescale} agrees with~\eqref{eq:ode-jordan}. 
Hence, we conclude that our NAG flow system is more tight and 
provides a unified way to handle $\mu=0$ and $\mu>0$.

Now, let us look at a concrete rescaling example. 
Let the scaling factor $\alpha$ solve
\begin{equation}
	\label{eq:a}
	2\alpha' \leqslant \mu-\alpha^2,\quad\alpha(0)=\sqrt{\gamma_0}.
\end{equation}
For instance, the following choice is allowed:
\begin{equation}\label{eq:alp-1}
	\alpha(\tau)=\frac{\sqrt{\gamma_0}\ b}
	{\sqrt{\gamma_0}\, \tau+b},\quad0<b\leqslant2.
\end{equation}
For the equality case of~\eqref{eq:a}, 
we have a closed-form solution 
\begin{equation}\label{eq:alp-2}
	\alpha(\tau) = 
	\left\{
	\begin{aligned}
		&\frac{2\sqrt{\gamma_0}}
		{\sqrt{\gamma_0}\, \tau+2},&&\text{ if }\mu=0,\\
		&\sqrt{\mu}\cdot
		\frac{e^{\sqrt{\mu} \, \tau}-\alpha_\mu}
		{e^{\sqrt{\mu} \, \tau}+\alpha_\mu},&&\text{ if }\mu>0,
	\end{aligned}
	\right.
\end{equation}
where 
\[
\alpha_\mu=
\frac{\sqrt{\mu}-\sqrt{\gamma_0}}
{\sqrt{\mu}+\sqrt{\gamma_0}}\in(-1,1).
\]
We now set $\beta = \alpha^2$ 
which fulfills~\eqref{eq:gammat-NAG-rescale} by our assumption~\eqref{eq:a},
then the scaled NAG flow~\eqref{eq:ode-sys-NAG-rescale}
gives a new HB system
\begin{equation}\label{eq:y}
	y'' + \frac{1}{\alpha}
	\left(\mu+\alpha^2-\alpha'\right)y'+\nabla f(y)=0.
\end{equation}
According to~\eqref{eq:conv-L-rescale}, we have the estimate
\[
\widetilde{\mathcal L}(\tau)
\leqslant 
\left\{
\begin{aligned}
	{}&\frac{b^b\widetilde{\mathcal L}(0)}{(\sqrt{\gamma_0}\tau+b)^b},
	&&\text{ if } \alpha \text{ satisfies~\eqref{eq:alp-1}},\\
	{}&	\frac{(1+\alpha_\mu)^2\widetilde{\mathcal L}(0)}{
		\left(
		e^{\sqrt{\mu} \tau/2}+\alpha_\mu e^{-\sqrt{\mu} \tau/2}
		\right)^2},&&\text{ if } \alpha \text{ satisfies~\eqref{eq:alp-2} and } \mu>0.
\end{aligned}
\right.
\]
Particularly, if $\mu>0$ and $\alpha$ satisfies~\eqref{eq:alp-2} with $\gamma_0=\sqrt{\mu}$, then $\alpha(\tau)=\sqrt{\mu}$ and~\eqref{eq:y} recovers~\eqref{eq:Wilson-ode-par} with the same rate $O(e^{-\sqrt{\mu}\tau})$. Moreover, if $\mu=0$ and $\alpha$ satisfies~\eqref{eq:alp-1} with $\gamma_0=4$ and $b=2$, then $\alpha(\tau)=2/(\tau+1)$ and~\eqref{eq:y} becomes 
\[
y'' + \frac{3}{\tau+1}y'+\nabla f(y)=0,\quad\tau>0,
\]
which gives the decay rate $O(\tau^{-2})$ and coincides with the prevailing ODE model \eqref{eq:Su2015-ode} derived in~\cite{Su;Boyd;Candes:2016differential}.
\section{An Implicit Scheme}
\label{sec:im}
Exponential decay of an implicit discretization for solving
\eqref{eq:ode-sys-NAG} can be established, which is more or 
less straightforward since one can easily follow the proof from the continuous problem. 
However, the implicit scheme requires efficient solver or proximal calculation and may not be practical sometimes. It is presented here to bridge the analysis from the continuous level to semi-implicit and explicit schemes.  

Consider the following implicit scheme 
\begin{equation}\label{eq:im-ode-NAG}
	\left\{
	\begin{aligned}
		\frac{x_{k+1}-x_{k}}{\alpha_k}={}& v_{k+1}-x_{k+1},\\
		\frac{v_{k+1}-v_{k}}{\alpha_k}={}&
		\frac{\mu}{\gamma_k}  (x_{k+1}-v_{k+1})
		-\frac{1}{\gamma_k}\nabla f(x_{k+1}),
	\end{aligned}
	\right.
\end{equation}
where $\alpha_k>0$ denotes the time step size to discretize the time derivative and the 
parameter equation~\eqref{eq:gammat-NAG} 
is also discretized implicitly
\begin{equation}\label{eq:im-gammat-NAG}
	\frac{		\gamma_{k+1}  - \gamma_{k}}{\alpha_k}  
	=\mu-\gamma_{k+1},
	\quad \gamma_0>0.
\end{equation}

We shall present the convergence result 
for the implicit scheme~\eqref{eq:im-ode-NAG}. 
To do so, we introduce a suitable Lyapunov function
\begin{equation}
	\label{eq:Lk}
	\mathcal L_k:={}
	f(x_k)-f(x^*) +
	\frac{\gamma_k}{2}
	\nm{v_k-x^*}^2,
\end{equation}
which is clearly a discrete analogue to the continuous 
one~\eqref{eq:Lt-NAG}. 
\begin{theorem}\label{thm:conv-im-ode-NAG}
	If $ f\in\mathcal S_{\mu}^{1}$ with $\mu\geqslant0$, then 
	for the scheme~\eqref{eq:im-ode-NAG} with $\alpha_k>0$, we have
	\begin{equation*}
		\mathcal L_{k+1}\leqslant 
		\frac{	\mathcal L_k}{1+\alpha_k },
		\quad k\in\mathbb N.
	\end{equation*}
\end{theorem}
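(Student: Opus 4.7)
The plan is to prove the equivalent statement $(1+\alpha_k)\mathcal L_{k+1}\leqslant \mathcal L_k$, which is the exact discrete counterpart of the continuous estimate $\mathcal L'(t)\leqslant-\mathcal L(t)$ from Lemma~\ref{lem:conv-ode-NAG}. The strategy is to mirror the continuous derivation line by line, replacing each use of the chain rule by the three-term identity of Lemma~\ref{lem:cross}, and each use of the ODE by its implicit discretization~\eqref{eq:im-ode-NAG}.

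First I would expand $\mathcal L_{k+1}-\mathcal L_k$ as $[f(x_{k+1})-f(x_k)]+\frac{\gamma_{k+1}-\gamma_k}{2}\nm{v_{k+1}-x^*}^2+\frac{\gamma_k}{2}(\nm{v_{k+1}-x^*}^2-\nm{v_k-x^*}^2)$, and rewrite the last bracket via Lemma~\ref{lem:cross} applied with $(u,v,w)=(v_k,v_{k+1},x^*)$, producing a cross term $\gamma_k(v_{k+1}-v_k,v_{k+1}-x^*)$ and a dissipation $-\frac{\gamma_k}{2}\nm{v_{k+1}-v_k}^2$. Substituting the discrete equations $\gamma_{k+1}-\gamma_k=\alpha_k(\mu-\gamma_{k+1})$ and $\gamma_k(v_{k+1}-v_k)=\alpha_k[\mu(x_{k+1}-v_{k+1})-\nabla f(x_{k+1})]$ converts everything into squared norms, a $\mu$-weighted cross term $\alpha_k\mu(x_{k+1}-v_{k+1},v_{k+1}-x^*)$, and a gradient inner product $-\alpha_k\dual{\nabla f(x_{k+1}),v_{k+1}-x^*}$.

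Next I would split $v_{k+1}-x^*=(v_{k+1}-x_{k+1})+(x_{k+1}-x^*)$ in the gradient term, exactly as in~\eqref{cross}, and use the first implicit equation $x_{k+1}-x_k=\alpha_k(v_{k+1}-x_{k+1})$ to rewrite $\alpha_k\dual{\nabla f(x_{k+1}),v_{k+1}-x_{k+1}}=\dual{\nabla f(x_{k+1}),x_{k+1}-x_k}$; this is the discrete analog of the cancellation against $x'=v-x$. Then I would apply $\mu$-convexity~\eqref{eq:convex-mu} at $y=x_{k+1}$ twice, once with $x=x_k$ and once with $x=x^*$, to bound $-\dual{\nabla f(x_{k+1}),x_{k+1}-x_k}$ and $-\alpha_k\dual{\nabla f(x_{k+1}),x_{k+1}-x^*}$ from above. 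The first cancels the $f(x_{k+1})-f(x_k)$ term in the energy difference while leaving a free $-\frac{\mu}{2}\nm{x_{k+1}-x_k}^2$; the second produces exactly the $-\alpha_k[f(x_{k+1})-f(x^*)]$ needed for $-\alpha_k\mathcal L_{k+1}$, together with $-\frac{\alpha_k\mu}{2}\nm{x_{k+1}-x^*}^2$.

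The main obstacle is the final bookkeeping of the $\mu$-coupled quadratic pieces, and the key is a second application of Lemma~\ref{lem:cross}, this time with $(u,v,w)=(x_{k+1},v_{k+1},x^*)$, to convert the cross term into $\frac{\alpha_k\mu}{2}[\nm{x_{k+1}-x^*}^2-\nm{x_{k+1}-v_{k+1}}^2-\nm{v_{k+1}-x^*}^2]$. The $\nm{x_{k+1}-x^*}^2$ part cancels against the strong-convexity contribution, while the $-\frac{\alpha_k\mu}{2}\nm{v_{k+1}-x^*}^2$ part combines with the earlier $\frac{\alpha_k(\mu-\gamma_{k+1})}{2}\nm{v_{k+1}-x^*}^2$ to yield precisely $-\frac{\alpha_k\gamma_{k+1}}{2}\nm{v_{k+1}-x^*}^2$, completing the match with $-\alpha_k\mathcal L_{k+1}$. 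What remains is a nonnegative sum of the form $\frac{\mu}{2}\nm{x_{k+1}-x_k}^2+\frac{\alpha_k\mu}{2}\nm{x_{k+1}-v_{k+1}}^2+\frac{\gamma_k}{2}\nm{v_{k+1}-v_k}^2$ on the favorable side; this is the discrete analog of the surplus $-\frac{\mu}{2}\|x'\|^2$ in \eqref{eq:dL}, produced automatically by the implicitness of the scheme, and it gives $(1+\alpha_k)\mathcal L_{k+1}\leqslant \mathcal L_k$ as desired.
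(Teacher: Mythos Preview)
Your proposal is correct and follows essentially the same route as the paper: expand $\mathcal L_{k+1}-\mathcal L_k$, use Lemma~\ref{lem:cross} to extract the cross term $\gamma_k(v_{k+1}-v_k,v_{k+1}-x^*)$ and the dissipation $-\tfrac{\gamma_k}{2}\nm{v_{k+1}-v_k}^2$, substitute both implicit equations, split the gradient via $v_{k+1}-x^*=(v_{k+1}-x_{k+1})+(x_{k+1}-x^*)$, apply Lemma~\ref{lem:cross} again to $(x_{k+1},v_{k+1},x^*)$, and invoke $\mu$-convexity. Your surplus $\tfrac{\mu}{2}\nm{x_{k+1}-x_k}^2+\tfrac{\alpha_k\mu}{2}\nm{x_{k+1}-v_{k+1}}^2$ is exactly the paper's $\tfrac{\mu(\alpha_k+1)}{2\alpha_k}\nm{x_{k+1}-x_k}^2$ after using $x_{k+1}-v_{k+1}=-\tfrac{1}{\alpha_k}(x_{k+1}-x_k)$.
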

\begin{proof}
	It suffices to prove
	\begin{equation}\label{eq:diff-Lk-NAG-im}
		\mathcal L_{k+1}-\mathcal L_{k}\leqslant 
		-\alpha_k 
		\mathcal L_{k+1}.
	\end{equation}
	
	Let us mimic the proof of Lemma \ref{lem:conv-ode-NAG}. 
	Instead of the derivative, we compute the difference as follows
	\[
	\begin{split}
		\mathcal L_{k+1}-\mathcal L_{k} 
		={}&f(x_{k+1})-f(x_k) + 
		\frac{\gamma_{k+1}-\gamma_k}{2}
		\nm{v_{k+1}-x^*}^2\\
		{}&\quad+\frac{\gamma_k}{2}
		\left(\nm{v_{k+1}-x^*}^2-\nm{v_{k}-x^*}^2 
		\right)\\
		={}&f(x_{k+1})-f(x_k) + 
		\frac{\alpha_k}{2}(\mu- \gamma_{k+1})
		\nm{v_{k+1}-x^*}^2\\
		{}&\quad+\gamma_k
		\inner{v_{k+1}-v_k, (v_{k+1}+v_{k})/2 -x^*}.
	\end{split}
	\]
	Analogously to the continuous level, we focus on the last term
	\[
	\begin{aligned}
		{}&\gamma_k
		\inner{v_{k+1}-v_k, (v_{k+1}+v_{k})/2 -x^*} \\
		= {}&
		\gamma_k
		\inner{v_{k+1}-v_k, v_{k+1} -x^*} - 
		\frac{\gamma_k}2
		\nm{v_{k+1}-v_k}^2.
	\end{aligned}
	\]
	By~\eqref{eq:im-ode-NAG}, it follows that
	\[
	\begin{split}
		{}&\gamma_k\inner{v_{k+1}-v_k, v_{k+1} -x^*} \\
		= {}&
		\mu\alpha_k\inner{x_{k+1} - v_{k+1}, v_{k+1} - x^{*}}-
		\alpha_k \dual{ \nabla f(x_{k+1}), v_{k+1} - x^{*}},
	\end{split}
	\]
	and  we use Lemma \ref{lem:cross} to split the cross term into squares:
	\begin{equation*}
		\begin{split}
			{}&2\inner{x_{k+1} - v_{k+1}, v_{k+1} - x^{*}} \\
			={}&\left\|x_{k+1}-x^{*}\right\|^{2}-\|x_{k+1}-v_{k+1}\|^{2}
			-\left\|v_{k+1}-x^{*}\right\|^{2}.
		\end{split}
	\end{equation*}
	For the gradient term, we have $v_{k+1}-x^{*} = v_{k+1}-x_{k+1} + x_{k+1}  - x^{*}$ and use~\eqref{eq:im-ode-NAG} to obtain
	\begin{align*}
		{}&-\alpha_k \dual{ \nabla f(x_{k+1}), v_{k+1} - x^{*}} \\
		={}& - \dual{ \nabla f(x_{k+1}), x_{k+1} - x_{k}} 
		- \alpha_k\dual{   \nabla f(x_{k+1}), x_{k+1} - x^{*}}.
	\end{align*}
	Consequently, using the $\mu$-strongly convex 
	property (cf.\eqref{eq:convex-mu}) of $f$ and dropping surplus negative square terms, we see
	\[
	\begin{split}
		\mathcal L_{k+1}-\mathcal L_{k} 
		\leqslant{}&-\alpha_k \mathcal L_{k+1}.
	\end{split}
	\]
	This proves~\eqref{eq:diff-Lk-NAG-im} 
	and concludes the proof of this {theorem}.
	
\end{proof}

We observe from Theorem \ref{thm:conv-im-ode-NAG}  that the fully implicit scheme
\eqref{eq:im-ode-NAG} achieves linear convergence rate as long as 
$\alpha_k\geqslant\alpha>0$ for all $k>0$ and 
larger $\alpha_k$ yields faster convergence rate. 
We also mention that~\eqref{eq:im-ode-NAG} can be rewritten as 
\begin{equation}\label{eq:prox-im}
	\left\{
	\begin{split}
		x_{k+1} ={}& \prox_{\eta_k f}(y_k),\\
		v_{k+1}={}&x_{k+1}+	\frac{x_{k+1}-x_k}{\alpha_k},
	\end{split}
	\right.
\end{equation}
where the proximal operator $\prox_{\eta_k f}$ has been introduced in \eqref{eq:prox} and
\[
\gamma_{k+1} = {}\frac{\gamma_k + \mu \alpha_k}{1+\alpha_k}, \,
\eta_k = {}\frac{\alpha_k^2}{\gamma_k+(\mu+\gamma_k)\alpha_k},
\,
y_k ={} \frac{\gamma_k\alpha_kv_k+(\gamma_k+\mu\alpha_k)x_k}
{\gamma_k+(\mu+\gamma_k)\alpha_k}.
\]
Therefore, it allows $f$ to be nonsmooth and we claim that 
Theorem \ref{thm:conv-im-ode-NAG} still holds true in this case. 
One just replaces the gradient $\nabla f(x_{k+1})$ with the subgradient $(y_k-x_{k+1})/\eta_k\in\partial f(x_{k+1})$; see~\eqref{eq:def-subg} and~\eqref{eq:prox-subg}.

For convex case, i.e., $\mu=0$, our method~\eqref{eq:prox-im} is very close to G\"{u}ler's proximal point algorithm~\cite{guler_new_1992}
\[
\left\{
\begin{split}
	x_{k+1} ={}& \prox_{\eta_k f}(y_k),\quad \eta_k =\alpha_k^2/\gamma_{k+1},\\
	v_{k+1}={}&x_{k}+	\frac{x_{k+1}-x_k}{\alpha_k},
\end{split}
\right.
\]
where $\gamma_{k+1}-\gamma_{k}=-\alpha_{k}\gamma_k$ and $ y_k ={}  \alpha_kv_k+(1-\alpha_k)x_k$.
Indeed, with suitable step size, they share the similar rate; see~\cite[Theorem 2.3]{guler_new_1992} 
and Theorem \ref{thm:appa} below.
\begin{theorem}\label{thm:appa}
	If $f$ is proper, closed and convex and we choose $\alpha_k^2=\eta_k\gamma_k(1+\alpha_k)$ with $\eta_k>0$, then for the 
	proximal point algorithm~\eqref{eq:prox-im} with $\mu=0$, we have
	\begin{equation}\label{eq:rate-Lk-appa}
		\frac{\mathcal L_{0}}{(1+\sum_{i=0}^{k-1}\sqrt{\gamma_0\eta_i})^2}
		\leqslant \mathcal L_{k}\leqslant {}	
		\frac{4\mathcal L_{0}}{(2+\sum_{i=0}^{k-1}\sqrt{\gamma_0\eta_i})^2},
	\end{equation}
	which means if $\sum_{k=0}^{\infty}\sqrt{\eta_k}=\infty$ then $\mathcal L_k\to0$ as $k\to\infty$.
	Moreover, it holds that
	\begin{equation}
		\label{eq:rate-Lk-appa-L}
		\mathcal L_{k}\leqslant {}
		\frac{4}{\sum_{i=0}^{k-1}\sqrt{\eta_i}}
		\left(
		\frac{1}{\gamma_0}\left(f(x_0)-f(x^*)\right)
		+\frac{1}{2}\nm{v_0-x^*}^2
		\right).
	\end{equation}
\end{theorem}
\begin{proof}
	For convenience and later use, define a sequence $\{\rho_{k}\}$ by that
	\begin{equation}\label{eq:rhok}
		\rho_0=1,\quad\rho_k:=		\prod_{i=0}^{k-1}\frac{1}{1+\alpha_i},\quad k\geqslant 1.
	\end{equation}	
	As mentioned above, Theorem \ref{thm:conv-im-ode-NAG} holds true for such a nonsmooth $f$ and thus it is evident that $\mathcal L_k\leqslant \rho_k\mathcal L_0$. Invoking Lemma \ref{lem:gk-ak-im} proves~\eqref{eq:rate-Lk-appa} 
	and it is trivial to obtain~\eqref{eq:rate-Lk-appa-L} from~\eqref{eq:rate-Lk-appa}.
	This finishes the proof.
		\end{proof}
\begin{remark}
	\label{rem:gk-bd}
	Note that the sequence $\{\gamma_k\}$ in~\eqref{eq:im-gammat-NAG} is 
	bounded: $0<\gamma_{k}\leqslant \max\{\mu,\gamma_{0}\}$ and $\gamma_k\to\mu$ as $k\to\infty$. Hence, even for large $\gamma_0$, the Lyapunov function $\mathcal L_k$ is asymptotically bounded as $k\to\infty$. In addition, from~\eqref{eq:rate-Lk-appa} and~\eqref{eq:rate-Lk-appa-L}, we see that, for small $\gamma_0$, the convergence rate  depends on $\gamma_0$ but large $\gamma_0$ does not pollute the final rate. This fact also holds true for all the forthcoming convergence bounds.	
\end{remark}
\section{Gauss-Seidel Splitting with Corrections}
\label{sec:GS-correc}
This section considers the Gauss-Seidel splitting~\eqref{eq:GS}, which is a semi-implicit discretization.
In Section \ref{sec:GS}, we have established the spectrum bound $O(1-\sqrt{\mu/L})$ with step size $\alpha_k=O(\sqrt{\mu/L})$ for quadratic objectives. However, as we summarized in Section \ref{sec:limit}, spectrum analysis is not sufficient for (norm) convergence.

Indeed, in the sequel, we further show that, for the discrete Lyapunov function~\eqref{eq:Lk}, with any step size $\alpha_k>0$, the naive discretization~\eqref{eq:GS}, reformulated as~\eqref{eq:ex1-ode-NAG}, does not lead to the contraction property like~\eqref{eq:diff-Lk-NAG-im}. Therefore,  this motivates us to add some proper correction steps.
\subsection{The Gauss-Seidel splitting}
Recall the Gauss-Seidel splitting~\eqref{eq:GS}: 
given step size $\alpha_k>0$ and previous result
$(x_k,v_k)$, compute $(x_{k+1},v_{k+1})$ from
\begin{equation}\label{eq:ex1-ode-NAG}
	\left\{
	\begin{aligned}
		\frac{x_{k+1}-x_{k}}{\alpha_k}={}& v_{k}-x_{k+1},\\
		\frac{v_{k+1}-v_{k}}{\alpha_k}={}&
		\frac{\mu}{\gamma_k}(x_{k+1}-v_{k+1})
		-\frac{1}{\gamma_k}\nabla f(x_{k+1}).
	\end{aligned}
	\right.
\end{equation}
In addition, the parameter equation~\eqref{eq:gammat-NAG} of $\gamma$ 
is still discretized implicitly via~\eqref{eq:im-gammat-NAG}.
\begin{lemma}
	\label{lem:bd-NAG-GS-1st}
	If $f\in\mathcal {S}_{\mu}^{1}$ with $\mu\geqslant 0$, then 
	for~\eqref{eq:ex1-ode-NAG} with any step size $\alpha_k>0$, we have
	\begin{equation}\label{eq:bd-NAG-GS-1st}
		\mathcal L_{k+1}-\mathcal L_{k} 
		\leqslant-\alpha_k \mathcal L_{k+1}
		-	\frac{\gamma_k}2\nm{v_{k+1}-v_k}^2
		-\alpha_k\dual{\nabla f(x_{k+1}),v_{k+1} - v_k},
	\end{equation}
	and
	\begin{equation}\label{eq:bd-NAG-GS-1st-gd}
		\mathcal L_{k+1}-\mathcal L_{k} 
		\leqslant-\alpha_k \mathcal L_{k+1}
		+	\frac{\alpha_k^2}{2\gamma_k}
		\nm{\nabla f(x_{k+1})}_{*}^2.
	\end{equation}
\end{lemma}
\begin{proof}
	Following the proof of Theorem \ref{thm:conv-im-ode-NAG},
	we start from the difference
	\[
	\begin{split}
		\mathcal L_{k+1}-\mathcal L_{k}
		={}& f(x_{k+1})-f(x_k)-
		\frac{\alpha_k \gamma_{k+1}}{2}
		\nm{v_{k+1}-x^*}^2\\
		{}&\quad	-\frac{\mu\alpha_k}{2}
		\nm{x_{k+1}-v_{k+1}}^2-
		\frac{\gamma_k}2
		\nm{v_{k+1}-v_k}^2\\
		{}&\quad	+\frac{\mu\alpha_k}{2}
		\nm{x_{k+1}-x^*}^2
		-\alpha_k \dual{ \nabla f(x_{k+1}), v_{k+1} - x^{*}}.	
	\end{split}
	\]
	Using the update for $x_{k+1}$ in~\eqref{eq:ex1-ode-NAG}, 
	we split the gradient term as below
	\begin{align*}
		{}&-\alpha_k \dual{ \nabla f(x_{k+1}), v_{k+1} - x^{*}} \\
		={}& -\alpha_k\dual{\nabla f(x_{k+1}),v_{k+1} - v_k} -
		\dual{\nabla f(x_{k+1}),\alpha_k(v_k-x_{k+1})}\\
		{}&\quad -\alpha_k\dual{  \nabla f(x_{k+1}), x_{k+1} - x^{*}}\\
		={}&-\alpha_k\dual{\nabla f(x_{k+1}),v_{k+1} - v_k}
		- \dual{ \nabla f(x_{k+1}), x_{k+1} - x_{k}} \\
		{}&\quad	- \alpha_k\dual{  \nabla f(x_{k+1}), x_{k+1} - x^{*}}.
	\end{align*}
	As $f\in\mathcal {S}_{\mu}^{1}$, 
	we obtain that
	\[
	\begin{split}
		\mathcal L_{k+1}-\mathcal L_{k} 
		\leqslant{}&-\alpha_k \mathcal L_{k+1}
		-	\frac{\gamma_k}2\nm{v_{k+1}-v_k}^2
		-\alpha_k\dual{\nabla f(x_{k+1}),v_{k+1} - v_k}\\
		{}&\quad-\frac{\mu\alpha_k}{2}\nm{x_{k+1}-v_{k+1}}^2
		-\frac{\mu}{2}\nm{x_{k+1}-x_k}^2.
	\end{split}
	\]
	Ignoring all the negative terms of the second line, the above estimate implies~\eqref{eq:bd-NAG-GS-1st}.
	
	As we see, different from~\eqref{eq:diff-Lk-NAG-im}, the estimate 
	\eqref{eq:bd-NAG-GS-1st} contains a combination of a negative term and another cross term. Obviously, an easy application of Cauchy-Schwarz inequality yields
	\[
	-	\frac{\gamma_k}2\nm{v_{k+1}-v_k}^2
	-\alpha_k\dual{\nabla f(x_{k+1}),v_{k+1} - v_k}\leqslant \frac{\alpha_k^2}{2\gamma_k}\nm{\nabla f(x_{k+1})}_{*}^2.
	\]
	This yields another bound~\eqref{eq:bd-NAG-GS-1st-gd} that only involves a positive gradient norm.  	
\end{proof}
\subsection{A predictor-corrector method}
\label{sec:NAG-GS-1st-extra}
To conquer the cross 
term $	-\alpha_k\dual{\nabla f(x_{k+1}),v_{k+1} - v_k}$ in 
\eqref{eq:bd-NAG-GS-1st}, we add an extra extrapolation step 
to~\eqref{eq:ex1-ode-NAG} which can be thought as an semi-implicit discretization of $x'= v - x$ with the newest update $v_{k+1}$. More precisely, consider
\begin{equation}\label{eq:NAG-GS-1st-extra}
	\left\{
	\begin{aligned}
		\frac{y_{k}-x_{k}}{\alpha_k}={}& v_{k}-y_{k},\\
		\frac{v_{k+1}-v_{k}}{\alpha_k}={}&
		\frac{\mu}{\gamma_k}(y_{k}-v_{k+1})
		-\frac{1}{\gamma_k}\nabla f(y_{k}),\\
		\frac{x_{k+1}-x_{k}}{\alpha_k}={}& v_{k+1}-x_{k+1}.
	\end{aligned}
	\right.
\end{equation}
This is in line with the spirit of the predictor-corrector method for ODE solvers \cite[Section 3.8]{Suli:2010Numerical}. The variable $y_k$ is the predictor produced by an explicit scheme and $x_{k+1}$ is the corrector by an implicit scheme. It can be also thought of as a symmetric Gauss-Seidel iteration for approximating the implicit Euler method. Again, the parameter equation~\eqref{eq:gammat-NAG} of $\gamma$ 
is still discretized via~\eqref{eq:im-gammat-NAG}.

As the first two steps of \eqref{eq:NAG-GS-1st-extra} agree with \eqref{eq:ex1-ode-NAG}, with $x_{k+1}$ being $y_k$, recalling the estimate \eqref{eq:bd-NAG-GS-1st}, we have
\[
\widehat{	\mathcal L}_{k}  -\mathcal L_{k}
\leqslant 	-\alpha_k	
\widehat{\mathcal L}_{k}
-	\frac{\gamma_k}2\nm{v_{k+1}-v_k}^2
-\alpha_k\dual{\nabla f(y_{k}),v_{k+1} - v_k},
\]
where 
\begin{equation}\label{eq:hat-Lk}
	\widehat{		\mathcal L}_k := f(y_k)-f(x^*)+\frac{\gamma_{k+1}}2\nm{v_{k+1}-x^*}^2.
\end{equation}
Therefore, it follows that
\[
\widehat{	\mathcal L}_{k} 
\leqslant 	
\frac{\mathcal L_k}{1+\alpha_k}
-	\frac{\gamma_k}{2(1+\alpha_k)}\nm{v_{k+1}-v_k}^2
-\frac{\alpha_k}{1+\alpha_k}\dual{\nabla f(y_{k}),v_{k+1} - v_k}.
\]

From the update for $y_k$ and $x_{k+1}$ in~\eqref{eq:NAG-GS-1st-extra}, we find the relation
\[
x_{k+1}-y_k = \frac{\alpha_k}{1+\alpha_k}(v_{k+1}-v_k),
\]
and if $f\in\mathcal S^{1,1}_{\mu,L}$, then there comes the estimate (cf. \eqref{eq:Lip-L-equiv})
\[
\begin{split}
	{}&\mathcal L_{k+1}-	\widehat{		\mathcal L}_k  = f(x_{k+1})-f(y_k)\\
	\leqslant {}&\dual{\nabla f(y_k),x_{k+1}-y_k}+\frac{L}{2}\nm{x_{k+1}-y_k}^2\\
	={}&\frac{\alpha_k}{1+\alpha_k}\dual{\nabla f(y_{k}),v_{k+1}-v_k}
	+\frac{L\alpha_k^2}{2(1+\alpha_k)^2}\nm{v_{k+1}-v_k}^2.
\end{split}
\]
As a result, we obtain
\begin{equation}\label{eq:diff-1}
	\mathcal L_{k+1}\leqslant \frac{\mathcal L_k}{1+\alpha_k}
	+\left(
	\frac{L\alpha_k^2}{2(1+\alpha_k)^2}-\frac{\gamma_k}{2(1+\alpha_k)}
	\right)\nm{v_{k+1}-v_k}^2.
\end{equation}
The second term vanishes if we choose suitable step size; see the theorem below.
\begin{theorem}
	\label{thm:conv-ex0-ode-NAG}
	Assume that $f\in\mathcal {S}_{\mu,L}^{1,1}$ 
	with $0\leqslant \mu\leqslant L<\infty$
	and $L\alpha_k^2
	=\gamma_k(1+\alpha_k)$, then for the predictor-corrector 
	scheme~\eqref{eq:NAG-GS-1st-extra} together 
	with~\eqref{eq:im-gammat-NAG}, we have
	\begin{equation}\label{eq:decay-Lk-ex0}
		\mathcal L_{k+1}
		\leqslant 
		\frac{	 \mathcal L_k}{1+\alpha_k },		\quad k\in\mathbb N,
	\end{equation}
	where $\mathcal L_k$ is defined by~\eqref{eq:Lk}.
	Consequently, for all $k\geqslant 0$,
	\begin{equation}\label{eq:rate-Lk-ex0}
		\mathcal L_{k}\leqslant {}	\mathcal L_{0}\times
		\min\left\{
		\frac{4L}{(\sqrt{\gamma_0}\, k+2\sqrt{L})^2},\,
		\left(1+\sqrt{\frac{\min\{\gamma_0,\mu\}}L}\right)^{-k}
		\right\},
	\end{equation}
	and moreover, 
	for all $k\geqslant 1$,
	\begin{equation}
		\label{eq:rate-Lk-ex0-L}
		\begin{split}
			\mathcal L_{k}\leqslant {}&
			C_{\gamma_0,L}\times
			\min\left\{
			\frac{4}{k^2},\,
			\left(1+\sqrt{\frac{\min\{\gamma_0,\mu\}}L}\right)^{1-k}
			\right\},
		\end{split}
	\end{equation}
	where 
	\begin{equation}\label{eq:CgL}
		C_{\gamma_0,L}:=	\frac{L}{\gamma_0}
		\big(	f(x_0)-f(x^*)\big)+\frac{L}{2}
		\nm{v_0-x^*}^2.
	\end{equation}
\end{theorem}
\begin{proof}
	The inequality~\eqref{eq:diff-1} suggests the choice  $L\alpha_k^2
	=\gamma_k(1+\alpha_k)$ and promises~\eqref{eq:decay-Lk-ex0}.
	Recalling the sequence $\{\rho_{k}\}$ defined by~\eqref{eq:rhok}, we have $	\mathcal L_k\leqslant \rho_k\mathcal L_0$.
	Hence, using Lemma \ref{lem:gk-ak-im} gives the decay estimate 
	of $\rho_k$ and proves~\eqref{eq:rate-Lk-ex0}.
	
	It remains to check~\eqref{eq:rate-Lk-ex0-L} for all $k\geqslant 1$. 
	From Lemma \ref{lem:gk-ak-im} we easily get
	\begin{equation}\label{eq:ekL0-1}
		\begin{aligned}
			\rho_k\mathcal L_0
			\leqslant  &
			\left(
			f(x_0)-f(x^*)+\frac{\gamma_0}{2}
			\nm{v_0-x^*}^2
			\right)\times 	\frac{4L}{(\sqrt{\gamma_0}\, k+2\sqrt{L})^2}
			\leqslant  {}\frac{4C_{\gamma_{0},L}}{k^2}\\
		\end{aligned}
	\end{equation} 
	On the other hand, by the relation $L\alpha_0^2=\gamma_0(1+\alpha_0)$, 
	it is evident that
	\[
	\alpha_0 = \frac{1}{2L}
	\left(\gamma_0+\sqrt{4\gamma_0L+\gamma_0^2}\right),
	\]
	which implies 
	\[
	\frac{1}{1+\alpha_0}=
	\frac{2L}
	{\gamma_0+2L+\sqrt{4\gamma_0L+\gamma_0^2}}
	\leqslant \frac{L}{\gamma_0}.
	\]
	The above estimate also indicates that
	\begin{equation*}
		\rho_k\mathcal L_0
		= \frac{\mathcal L_0}{1+\alpha_0}
		\frac{\rho_k}{\rho_1}
		\leqslant
		C_{\gamma_{0},L}\frac{\rho_k}{\rho_1}
		=C_{\gamma_{0},L}
		\times 
		\prod_{i=1}^{k-1}\frac{1}{1+\alpha_i}.
	\end{equation*}
	Applying Lemma \ref{lem:gk-ak-im} shows that $\alpha_{k}\geqslant \sqrt{\min\{\gamma_0,\mu\}/L}$ and it follows that
	\[
	\rho_k\mathcal L_0
	\leqslant 
	C_{\gamma_{0},L}
	\times
	\left(1+\sqrt{\min\{\gamma_0,\mu\}/L}\right)^{1-k}.
	\]
	Collecting this estimate and~\eqref{eq:ekL0-1} establishes 
	the final rate~\eqref{eq:rate-Lk-ex0-L} and thus 
	completes the proof of this theorem.
	
\end{proof}
\begin{remark}
	We mention that the estimate \eqref{eq:rate-Lk-ex0-L} verifies the claim made previously in Remark \ref{rem:gk-bd}. That is, the convergence rate given in Theorem \ref{thm:conv-ex0-ode-NAG} depends on small $\gamma_0$ but is
	robust when $\gamma_0\geqslant L$.
\end{remark}
\subsection{Correction via a gradient step}
Motivated by the estimate~\eqref{eq:bd-NAG-GS-1st-gd}, we can also aim to 
cancel the gradient norm square. One preferable 
choice is the gradient descent step and according to our discussion below, any other correction step satisfying the decay property~\eqref{eq:decay2} is acceptable. Note that the two numerical schemes proposed in~\cite{Siegel:2019} and~\cite{Wilson:2018} for the HB equation~\eqref{eq:Wilson-ode-par} also have additional gradient steps. 

As what we did before, replace $x_{k+1}$ by $y_k$ in 
\eqref{eq:ex1-ode-NAG} and consider the following corrected scheme:
given $\alpha_k>0$ and 
$(x_k,v_k)$, compute $(x_{k+1},v_{k+1})$ from
\begin{equation}\label{eq:ex-1-NAG}
	\left\{
	\begin{aligned}
		\frac{y_{k}-x_{k}}{\alpha_k}={}& v_{k}-y_{k},\\
		\frac{v_{k+1}-v_{k}}{\alpha_k}={}&
		\frac{\mu}{\gamma_k}(y_{k}-v_{k+1})
		-\frac{1}{\gamma_k}\nabla f(y_{k}),\\
		x_{k+1}-y_k={}&-\frac{1}{L}\nabla f(y_k).
	\end{aligned}
	\right.
\end{equation}
The implicit discretization~\eqref{eq:im-gammat-NAG} for the parameter equation~\eqref{eq:gammat-NAG} keeps unchanged here.
In the first equation $y_k$ can be solved in terms of the known data $(x_k, v_k)$. After that, we evaluate the gradient $\nabla f(y_k)$ once and use it to update $(x_{k+1}, v_{k+1})$. 
\begin{theorem}
	\label{thm:conv-ex1-ode-NAG}
	Assume that $f\in\mathcal {S}_{\mu,L}^{1,1}$ 
	with $0\leqslant \mu\leqslant L<\infty$ and $L\alpha_k^2
	=\gamma_k(1+\alpha_k)$, then for the corrected 
	scheme~\eqref{eq:ex-1-NAG} together with~\eqref{eq:im-gammat-NAG}, we have
	\begin{equation}\label{eq:decay-Lk-ex1}
		\mathcal L_{k+1}
		\leqslant 
		\frac{	 \mathcal L_k}{1+\alpha_k },		\quad k\in\mathbb N,
	\end{equation}
	where $\mathcal L_k$ is defined by~\eqref{eq:Lk}, and both the 
	two estimates~\eqref{eq:rate-Lk-ex0} and 
	\eqref{eq:rate-Lk-ex0-L} hold true here.
\end{theorem}
\begin{proof}
	According to~\eqref{eq:bd-NAG-GS-1st-gd} in Lemma \ref{lem:bd-NAG-GS-1st}, we have established that
	\begin{equation}\label{eq:mid-bd-Lk}
		\widehat{\mathcal L}_{k}-\mathcal L_{k} 
		\leqslant-\alpha_k \widehat{\mathcal L}_{k}+
		\frac{\alpha_k^2}{2\gamma_k}
		\nm{\nabla f(y_{k})}_{*}^2,
	\end{equation}
	where $\widehat{\mathcal L}_k$ is defined by~\eqref{eq:hat-Lk}.
	Thanks to the additional gradient step 
	in~\eqref{eq:ex-1-NAG}, we have the basic gradient descent inequality:
	\begin{equation}
		\label{eq:decay2}
		f(x_{k+1})-f(y_k)\leqslant 
		-\frac{1}{2L}
		\nm{\nabla f(y_k)}_{*}^2,
	\end{equation}
	which comes from \eqref{eq:Lip-L-equiv} since $f\in\mathcal S_{\mu,L}^{1,1}$ and implies that 
	\[
	\mathcal L_{k+1}\leqslant
	\widehat{\mathcal L}_k
	-\frac{1}{2L}
	\nm{\nabla f(y_k)}_{*}^2.
	\]
	Plugging this into~\eqref{eq:mid-bd-Lk} gives 
	\[
	\mathcal L_{k+1}-\mathcal L_{k} 
	\leqslant-\alpha_k \mathcal L_{k+1}
	+\frac{1}{2L\gamma_k}
	\left( L\alpha_k^2 -\gamma_k(1+\alpha_k)\right)
	\nm{\nabla f(y_k)}_{*}^2.
	\]
	This together with the condition $L\alpha_k^2
	=\gamma_k(1+\alpha_k)$ yields~\eqref{eq:decay-Lk-ex1}. 
	
	As we choose the same step size as Theorem \ref{thm:conv-ex0-ode-NAG}, based on the contraction~\eqref{eq:decay-Lk-ex1}, it is trivial to conclude that the two estimates~\eqref{eq:rate-Lk-ex0} and~\eqref{eq:rate-Lk-ex0-L} hold true here indeed. This completes the proof of this theorem.
	 \end{proof}	
\section{A Corrected Semi-implicit Scheme from NAG Method}
\label{sec:NAG}
In this section, we consider another semi-implicit scheme 
which comes exactly from Nesterov accelerated gradient method.
\subsection{NAG method}
In~\cite[Chapter 2, General scheme of optimal method]{Nesterov:2013Introductory}, 
by using the estimate sequence, Nesterov presented 
an accelerated gradient method for solving~\eqref{min} 
with 
$f\in\mathcal {S}_{\mu,L}^{1,1}$ 
with $0\leqslant \mu\leqslant L<\infty$;
see Algorithm \ref{algo:NAG} below.
\begin{algorithm}[H]
	\caption{Nesterov Accelerated Gradient (NAG) Method}
	\label{algo:NAG}
	\begin{algorithmic}[1] 
		\REQUIRE  $x_0,v_0\in V$ and $\gamma_0>0$.
		\FOR{$k=0,1,\ldots$}
		\STATE Compute $\alpha_k\in(0,1)$ from $		L\alpha_k^2 = (1-\alpha_k)\gamma_k+\mu\alpha_k$.
		\STATE Update $\displaystyle{\gamma_{k+1} = (1-\alpha_k)\gamma_k+\mu\alpha_k}$.
		\STATE Set $\displaystyle{y_k =\frac{\alpha_k\gamma_kv_k+\gamma_{k+1}x_k}{\gamma_k+\mu\alpha_k}}$.
		\STATE Update $x_{k+1} $ such that $\displaystyle{f(x_{k+1})\leqslant f(y_{k})-\frac{1}{2L}\nm{\nabla f(y_{k})}_{*}^2}$.
		\label{algo:NAG-update-xk}
		\STATE Update $\displaystyle{v_{k+1} =
			\frac{1}{\gamma_{k+1}} \left [(1-\alpha_k)\gamma_kv_k+\alpha_k(\mu y_k- \nabla f(y_k))\right ]}$.	
		\ENDFOR
	\end{algorithmic}
\end{algorithm}
Note that we have many choices for $x_{k+1}$ in step \ref{algo:NAG-update-xk} of Algorithm \ref{algo:NAG}. One noticeable example is the gradient descent step (see~\cite[Chapter 2, Constant Step Scheme, I]{Nesterov:2013Introductory}):
\begin{equation}\label{eq:update-xk}
	x_{k+1} = y_k - \frac{1}{L}\nabla f(y_k).
\end{equation}
With this choice,
the sequence $\{v_k\}$ in Algorithm 
\ref{algo:NAG} can be eliminated and $y_{k+1}$ is updated by that 
(see~\cite[Chapter 2, Constant Step Scheme, II]{Nesterov:2013Introductory})
\[
y_{k+1} = x_{k+1} + \frac{\alpha_k-\alpha_k^2}{\alpha_{k+1}+\alpha_k^2}
(x_{k+1}-x_k),
\]
where $\alpha_{k+1}\in(0,1)$ is calculated from the quadratic equation
\[
L\alpha_{k+1}^2 = L\alpha^2_k(1-\alpha_{k+1})+\mu\alpha_{k+1}.
\]
If $\mu>0$ and $\alpha_0=\sqrt{\mu/L}$, then $\alpha_k = \sqrt{\mu/L}$; see~\cite[Chapter 2, Constant Step Scheme, III]{Nesterov:2013Introductory}.
In particular, if $\mu=0$, then Algorithm \ref{algo:NAG} (with $x_{k+1}$ updated by~\eqref{eq:update-xk}) coincides with the accelerated scheme proposed by Nesterov early in the 1980s~\cite{Nesterov1983}.

\subsection{NAG method as a corrected  semi-implicit scheme}
After simple calculations, we can rewrite 
Algorithm \ref{algo:NAG} as an equivalent form
\begin{equation}\label{eq:ex2-ode-NAG-1}
	\left\{
	\begin{aligned}
		\frac{\gamma_{k+1}  - \gamma_{k}  }{\alpha_k}
		={}&\mu-\gamma_{k},\\
		\frac{y_{k}-x_{k}}{\alpha_k}={}& 
		\frac{\gamma_k}{\gamma_{k+1}}(v_{k}-y_k),\\
		\frac{v_{k+1}-v_{k}}{\alpha_k}={}&
		\frac{\mu}{\gamma_{k+1}}(y_k-v_{k})
		-\frac{1}{\gamma_{k+1}}\nabla f(y_k),
	\end{aligned}
	\right.
\end{equation}
where in addition we update $x_{k+1}$ satisfying
\begin{equation}\label{eq:ex2-ode-NAG-2}
	f(x_{k+1})\leqslant f(y_k)-
	\frac{1}{2L}\nm{\nabla f(y_k)}_{*}^2.
\end{equation}
Surprisingly,~\eqref{eq:ex2-ode-NAG-1} formulates a semi-implicit discretization 
for our NAG flow~\eqref{eq:ode-sys-NAG} with a correction step \eqref{eq:ex2-ode-NAG-2} and an explicit discretization for the equation~\eqref{eq:gammat-NAG} of $\gamma$. Similar to~\eqref{eq:ex-1-NAG}, we can adopt the gradient descent step 
which promises~\eqref{eq:ex2-ode-NAG-2}.

Based on subtle algebraic calculations of the estimate sequence, Nesterov~\cite[Chapter 2]{Nesterov:2013Introductory} proved the convergence rate of Algorithm \ref{algo:NAG}. 
In the following, we give an alternative proof by using the Lyapunov function~\eqref{eq:Lk}.
\begin{theorem}
	\label{thm:conv-ex2-ode-NAG}
	Assume that 	$f\in\mathcal {S}_{\mu,L}^{1,1}$ 
	with $0\leqslant \mu\leqslant L<\infty$.
	If $L\alpha_k^2=  \gamma_{k+1}$, 
	then for Algorithm \ref{algo:NAG}, i.e., the scheme 
	\eqref{eq:ex2-ode-NAG-1} together with~\eqref{eq:ex2-ode-NAG-2}, we have $0<\alpha_k\leqslant 1$ and
	\begin{equation}
		\label{eq:decay-Lk-ex2}
		\mathcal L_{k+1}\leqslant (1-\alpha_k)\mathcal L_k,
		\quad k\in\mathbb N,
	\end{equation}
	where $\mathcal L_k$ is defined by~\eqref{eq:Lk}.
	Consequently for all $k\geqslant 0$,
	\begin{equation}\label{eq:rate-Lk-ex2}
		\mathcal L_{k}\leqslant {}	\mathcal L_{0}\times
		\min\left\{
		\frac{4L}{(\sqrt{\gamma_0}\, k+2\sqrt{L})^2},\,
		\left(1-\sqrt{\frac{\min\{\gamma_1,\mu\}}L}\right)^{k}
		\right\}.
	\end{equation}
	Moreover, for all $k\geqslant 1$,
	\begin{equation}
		\label{eq:rate-Lk-ex0-L-NAG}
		\begin{split}
			\mathcal L_{k}\leqslant {}&
			C_{\gamma_0,L}\times
			\min\left\{
			\frac{4}{k^2},\,
			\left(1-\sqrt{\frac{\min\{\gamma_1,\mu\}}L}\right)^{k-1}
			\right\},
		\end{split}
	\end{equation}
	where $C_{\gamma_0,L}$ has been defined in \eqref{eq:CgL}.
\end{theorem}
\begin{proof}
	Let us first prove~\eqref{eq:decay-Lk-ex2}. 
	By~\eqref{eq:ex2-ode-NAG-1}, we find
	\[
	\left\{
	\begin{split}
		{}& 	v_{k} =y_{k}+
		\frac{\gamma_{k+1}}
		{\alpha_k\gamma_k}(y_{k}-x_{k}),\\
		{}&	v_{k+1} =y_k+\frac{1-\alpha_k}{\alpha_k}(y_k-x_k)
		-\frac{\alpha_k}{\gamma_{k+1}}\nabla f(y_k),
	\end{split}
	\right.
	\]
	and a direct computation gives
	\begin{equation*}
		\begin{split}
			{}&\frac{\gamma_{k+1}}{2}
			\nm{v_{k+1}-x^*}^2 - 
			\frac{\gamma_{k}}{2}(1-\alpha_k)
			\nm{v_{k}-x^*}^2 \\
			={}&
			\alpha_k
			\left(
			\dual{\nabla f(y_k),x^*-y_k}+
			\frac{\mu }{2}\nm{x^*-y_k}^2
			\right)
			\\
			{}&\quad +
			(1-\alpha_k)
			\left(
			\dual{\nabla f(y_k),x_k-y_k}+
			\frac{\mu }{2}\nm{x_k-y_k}^2
			\right)\\
			{}&\qquad+ \frac{\alpha_k^2}{2\gamma_{k+1}}\nm{\nabla f(y_k)}_*^2-\frac{\mu(1-\alpha_k)}{2\alpha_k\gamma_k}
			(\gamma_{k}+\mu\alpha_k)
			\nm{y_k-x_k}^2.
		\end{split}
	\end{equation*}
	Dropping the negative term $-\nm{y_k-x_k}^2$ and using the $\mu$-convexity of $f$ imply that
	\begin{equation*}
		\begin{split}
			{}&\frac{\gamma_{k+1}}{2}
			\nm{v_{k+1}-x^*}^2 - 
			\frac{\gamma_{k}}{2}(1-\alpha_k)
			\nm{v_{k}-x^*}^2 \\
			\leqslant {}&		\alpha_k
			\left(
			f(x^*)-f(y_k)
			\right)+		(1-\alpha_k)
			\left(f(x_k)-f(y_k)\right)
			+
			\frac{\alpha_k^2}{2\gamma_{k+1}}\nm{\nabla f(y_k)}_*^2,
		\end{split}
	\end{equation*}
	and we get the inequality
	\begin{equation*}
		\begin{split}
			\mathcal L_{k+1}-(1-\alpha_k)\mathcal L_{k} 
			\leqslant{}&
			f(x_{k+1})-f(y_{k})+ 
			\frac{\alpha_k^2}{2\gamma_{k+1}}\nm{\nabla f(y_k)}_{*}^2.
		\end{split}
	\end{equation*}
	Consequently, by~\eqref{eq:ex2-ode-NAG-2} and the relation $L\alpha_k^2=\gamma_{k+1}$, 
	the right hand side of the above inequality is negative, which proves ~\eqref{eq:decay-Lk-ex2}. 
	
	In this case, we modify \eqref{eq:rhok} as follows
	\begin{equation}\label{eq:rhok-1-a}
		\rho_0=1,\quad\rho_k:=		\prod_{i=0}^{k-1}(1-\alpha_i),\quad k\geqslant 1,
	\end{equation}
	then by~\eqref{eq:decay-Lk-ex2} it is clear that $		\mathcal L_k\leqslant \rho_k\mathcal L_0$,
	and invoking Lemma \ref{lem:gk-ak} 
	proves~\eqref{eq:rate-Lk-ex2}. As the proof of 
	\eqref{eq:rate-Lk-ex0-L-NAG} is very similar with that 
	of~\eqref{eq:rate-Lk-ex0-L}, we omit the details 
	here and conclude the proof of this theorem.		
	 \end{proof}
\begin{remark}\label{rem:compare}
	Similar to our corrected
	schemes~\eqref{eq:NAG-GS-1st-extra} and~\eqref{eq:ex-1-NAG}, 
	NAG method (i.e., Algorithm \ref{algo:NAG}) generates a three-term
	sequence $\{(x_k,y_{k},v_{k})\}$ as well.
	If $\mu=0$, then they share the same convergence rate bound
	\[
	\mathcal L_k\leqslant \frac{4L	 \mathcal L_0}{(\sqrt{\gamma_0}\, k+2\sqrt{L})^2},
	\]
	and when $\gamma_0=\mu>0$, we have
	\begin{equation}\label{eq:Lk-compa}
		\mathcal L_k\leqslant  \mathcal L_0\times
		\left\{
		\begin{aligned}
			&(1-\sqrt{\mu/L})^{k},&&\text{for NAG method},\\
			&(1+\sqrt{\mu/L})^{-k},&&\text{for~\eqref{eq:ex-1-NAG} and~\eqref{eq:NAG-GS-1st-extra} }.
		\end{aligned}
		\right.
	\end{equation}
	In view of the trivial fact 
	$$
	1 - \epsilon = \frac{1}{1+\epsilon} - \frac{\epsilon^2}{1+\epsilon}, \quad \epsilon =  \sqrt{\mu/L}\leqslant 1,
	$$
	we see the rates in~\eqref{eq:Lk-compa} are asymptotically the same 
	but NAG method can achieve a slightly better convergence rate. 
	However, we note that they share the same computational complexity
	\[
	O\left(
	\min\big\{
	\sqrt{L/\epsilon},\,\sqrt{L/\mu}\cdot|\ln\epsilon|
	\big\}
	\right),
	\]
	which is optimal, in the sense that~\cite{Nesterov:2013Introductory} it achieves the complexity lower bound of first-order algorithms for the function class $\mathcal S_{\mu,L}^{1,1}$ with $0\leqslant \mu\leqslant L<\infty$.
	
\end{remark}
\begin{remark}
	Unlike the gradient descent method, 
	the function value $f(x_k)$ of accelerated gradient methods may not decrease in each step. It is the discrete Lyapunov function $\mathcal L_k$ that is always decreasing; see \eqref{eq:decay-Lk-ex0}, \eqref{eq:decay-Lk-ex1} and \eqref{eq:decay-Lk-ex2}.
	
\end{remark}
\begin{remark}
	To reduce the function value, one can adopt the restating strategy~\cite{odonoghue_adaptive_2015}. Specifically, given $(\gamma_0,v_0,x_0)$, if $f(x_k)$ is increasing after $k$-iteration, then set $k=0$ and restart the iteration process with another initial guess $(\tilde{\gamma}_0,\tilde{v}_0,\tilde{x}_0)$.
	By Theorems \ref{thm:conv-ex0-ode-NAG}, \ref{thm:conv-ex1-ode-NAG} and \ref{thm:conv-ex2-ode-NAG}, 
	when $f\in\mathcal {S}_{0,L}^{1,1}$ and 
	$\gamma_0=L,v_0 = x_0$, we only have the sublinear convergence rate 
	\begin{equation}\label{eq:obj-rate}
		f(x_k)-f(x^*)
		\leqslant \frac{4}{k^2}\!
		\left(
		f(x_0)-f(x^*)+\frac{L}{2}
		\nm{x_0-x^*}^2
		\right)\!\!
		\leqslant \frac{4L}{k^2}
		\nm{x_0-x^*}^2,
	\end{equation}
	where we used~\eqref{eq:Lip-L-equiv}, which promises
	\[
	f(x_0)-f(x^*)\leqslant \frac{L}{2}\nm{x_0-x^*}^2.
	\]
	Additionally, assume $f$ satisfies the {\it quadratic growth condition} with $\sigma>0$:
	\[
	f(x)-f(x^*)\geqslant \sigma{\rm dist}^2(x,\argmin f)
	\quad\forall\,x\in V,
	\] 
	where ${\rm dist}(x,\argmin f) = \inf_{x^*\in\argmin f}\nm{x-x^*}$. 
	As \eqref{eq:obj-rate} holds for all $x^*\in\argmin f$, 
	we have immediately that
	\[
	f(x_k)-f(x^*)	\leqslant{} \frac{4L}{k^2}{\rm dist}^2(x,\argmin f)
	\leqslant 
	\frac{4L }{\sigma k^2}
	(f(x_0)-f(x^*)).
	\]
	Therefore, as analyzed in~\cite{necoara_linear_2019}, 
	if we consider fixed restart technique~\cite{odonoghue_adaptive_2015} 
	every $k$ steps, then after $N=nk$ steps we will get
	\[
	f(x_{N})-f(x^*)\leqslant \left(\frac{4L}
	{\sigma k^2}\right)^{n}(f(x_0)-f(x^*)).
	\]
	Evidently, the optimal choice $	k_{\#} =e \sqrt{4L/\sigma}$	
	yields the linear rate
	\[
	f(x_{N})-f(x^*)\leqslant 
	e^{-2N/k_{\#}}
	(f(x_0)-f(x^*)).
	\]	
	If the parameter $\sigma$ is unknown, one can use the adaptive 
	restart technique~\cite{odonoghue_adaptive_2015}. 
	
	When $f$ is quadratic and convex, changing $\gamma_k$ from $L$ to $\mu$ periodically will smoothing out error in different frequencies and can further optimize the constant in front of the accelerated rate. That is, the dynamically changing parameter $\{\gamma_k\}$ hopefully outperforms the fixed one $\gamma_k=\mu$. For general nonlinear convex functions, a rigorous justification of the restart strategy is under investigation. 
\end{remark}	
\section{Composite Convex Optimization}
\label{sec:comp}
In this part we mainly focus on the composite optimization
\begin{equation}\label{min-comp}
	\min_{x\in Q} f(x):=
	\min_{x\in Q} \left [h(x)+g(x) \right ],
\end{equation}
where $Q\subseteq V$ is a simple closed convex set, $h\in\mathcal S_{\mu,L}^{1,1}(Q)$ with $0\leqslant \mu\leqslant L<\infty$ 
and $g:V\to\mathbb R\cup\{+\infty\}$ is proper, closed and 
convex, and $Q\cap {\bf dom}\, g\neq\emptyset$. In general $g$ is not differentiable but its subdifferential $\partial g$ exists as a set-valued function. 
More precisely, the subdifferential $\partial g(x)$ of $g$ at $x$ is defined by that
\begin{equation}\label{eq:def-subg}
	\partial g(x) :=\left\{p\in V^*:\,g(y)\geqslant g(x)+\dual{p,y-x}\quad
	\,\forall\, y\in V\right\}.
\end{equation}

\begin{remark}\label{rem:f-g-fb}
	For the case that $h\in\mathcal S_{0,L}^{1,1}(Q)$ and $g$ is $\mu$-strongly convex 
	with $\mu\geqslant 0$, we can split $h+g$ as $(h(x) + \frac{\mu}{2}\|x\|^2) + (g(x) - \frac{\mu}{2}\|x\|^2)$, which reduces to our current assumption for~\eqref{min-comp}.
	
\end{remark}

We shall apply our ODE solver approach to the problem~\eqref{min-comp}. 
The first step is to generalize the dynamical system~\eqref{eq:ode-sys-NAG} to the current nonsmooth 
setting. Basically, we set $F = f+i_Q$ with $i_Q$ being the indicator function of $Q$ and obtain 
a differential inclusion for minimizing $F$ on $V$, which is equivalent to 
minimize $f$ over $Q$. After that, optimization methods (see Algorithms \ref{algo:APG} and \ref{algo:AFB}) for solving the original problem~\eqref{min-comp} with the accelerated convergence rate 
\[
O\left(\min\big\{L/k^2,(1+\sqrt{\mu/L})^{-k}\big\}\right)
\]
are proposed from numerical discretizations of the continuous model~\eqref{eq:ADI-x-v}. This is a proof of the effective and usefulness of our NAG flow model~\eqref{eq:ADI-x-v} and the ODE solver approach, by which we can construct new accelerated methods. 

\subsection{Continuous model}
For minimizing a nonsmooth function $F$ over $V$, our NAG 
flow~\eqref{eq:ode-sys-NAG} becomes a differential inclusion
\begin{equation}\label{eq:ADI-x-v}
	\left\{
	\begin{aligned}
		x'={}& v - x,\\
		\gamma v'\in{}&
		\mu (x - v )-\partial F(x).
	\end{aligned}
	\right.
\end{equation}
To ensure solution existence, suitable initial conditions shall be imposed later. Correspondingly, the second-order ODE~\eqref{eq:ode-NAG} reads as a 
second-order differential inclusion
\begin{equation}\label{eq:ADI-x}
	\gamma x''+(\mu+\gamma)x'+\partial F(x)\ni 0.
\end{equation}
Above, the scaling factor $\gamma$ is still the solution to~\eqref{eq:gammat-NAG}.

As the subdifferential $\partial F$ is a set-valued maximal monotone 
operator, classical $C^2$ solution to~\eqref{eq:ADI-x} may not exist 
because discontinuity can occur in $x'$. Therefore, the concept 
of {\it energy-conserving solution} has been introduced in~\cite{cabot_asymptotics_2007,paoli_existence_2000,schatzman_class_1978}. 

Let us assume the initial data
\begin{equation}\label{eq:x0-x1}
	x (0) = x_0\in{\bf dom} F\quad\text{and}\quad x'(0) 
	= x_1\in \mathcal T_{{\bf dom} F}(x_0),
\end{equation}
where $\mathcal T_{{\bf dom} F}(x_0)$ denotes the tangent 
cone of ${\bf dom} F$ at $x_0$:
\[
\mathcal T_{{\bf dom} F}(x_0):=
\overline{\mathop{\cup}\limits_{\tau>0}\tau(x_0-\overline{{\bf dom} F})}.
\]
In addition, we shall introduce some vector-valued functional spaces.
Given any interval $I\subset \mathbb R$, let $M(I;V)$ be the space of $V$-valued Radon measures on $I$; for any $m\in\mathbb N$ and $1\leqslant p\leqslant\infty$, $W^{m,p}(I;V)$ denotes the standard $V$-valued Sobolev space~\cite{Kreuter2015}; the space of all $V$-valued functions with bounded variation is defined by $BV(I;V)$~\cite{attouch_variational_2014}. 
Also, $W_{loc}^{m,p}(I;V)$ and $BV_{loc}(I;V)$ consist of all the sets $W^{m,p}(\omega;V)$ and $BV(\omega;V)$ respectively, where $\omega\subset I$ is any compact subset.
\begin{Def}\label{def:solu}
	We call $x:[0,\infty)\to V$ an energy-conserving
	solution to~\eqref{eq:ADI-x} 
	with initial data~\eqref{eq:x0-x1} if it satisfies the following.
	\begin{enumerate}
		\item $x\in W^{1,\infty}_{loc}(0,\infty;V),\,x(0) = x_0$ 
		and $x(t)\in {\bf dom} F$ for all $t>0$.
		\item $x'\in BV_{loc}([0,\infty);V),\,x'(0+) = x_1$.		
		\item For almost all $t>0$, there holds the energy equality:
		\[
		\begin{aligned}
			{}&	F(x(t)) + \frac{\gamma(t)}2\nm{x'(t)}^2
			+
			\int_{0}^{t}\frac{\mu+3\gamma(s)}{2}\nm{x'(s)}^2
			{\mathrm d}s
			= {} F(x_0) + \frac{\gamma_0}2\nm{x_1}^2.
		\end{aligned}
		\]
		\item There exists some $\nu\in M(0,\infty;V)$ such that 
		\[
		\gamma x'' + (\mu+\gamma)x'+\nu = 0
		\]
		holds in the sense of distributions, and for any $T>0$, we have
		\[
		\int_{0}^{T}\big(F(y(t))-F(x(t))\big){\mathrm d}t
		\geqslant\dual{	\nu,y-x}_{C([0,T];V)}
		\quad \text{for all } y\in C([0,T];V).
		\]
	\end{enumerate}
\end{Def}

In~\cite{luo_accelerated_2021}, the problem~\eqref{eq:ADI-x} has been extended to a general case 
\[
\gamma x''+(\mu+\gamma)x'+\partial F(x)\ni \xi,
\]
where $\xi$ stands for small perturbation.
Therefore, according to~\cite[Theorem 2.1]{luo_accelerated_2021}, we have the 
existence of an energy-conserving solution to~\eqref{eq:ADI-x} and by~\cite[Theorems 2.2 
and 2.3]{luo_accelerated_2021}, we obtain the exponential decay, which is a nonsmooth version of \eqref{eq:conv-ode-NAG}.
\begin{theorem}\label{thm:regu-ode-OAG}
	Assume V is a finite dimensional Hilbert space. In the sense of Definition \ref{def:solu}, the differential inclusion~\eqref{eq:ADI-x} admits an energy-conserving 
	solution $x:[0,\infty)\to V$ satisfying
	\begin{equation}\label{eq:ex-comp}
		F(x(t))-F(x^*)
		+\frac{\gamma(t)}{2}\nm{x(t)+x'(t)-x^*}^2
		\leqslant 2	\mathcal L_0e^{-t},
	\end{equation}		
	for almost all $t>0$, where $	\mathcal L_0 := F(x_0)-F(x^*)
	+\frac{\gamma_0}{2}\nm{x_0+x_1-x^*}^2$.
\end{theorem}
\begin{remark}
	If 
	additionally ${\bf dom} F = V$, then $x\in W^{2,\infty}_{loc}(0,\infty;V)
	\cap C^1([0,\infty);V)$ and~\eqref{eq:ex-comp} holds for all $t>0$. 
\end{remark}
\subsection{An APGM for unconstrained optimization}
\label{sec:APG}
Let us first consider the unconstrained case $Q=V$, i.e.,
\begin{equation}\label{min-comp-V}
	\min_{x\in V} f(x):=
	\min_{x\in V} \left [h(x)+g(x) \right ],
\end{equation}
where $f\in\mathcal S_{\mu,L}^{1,1}$ with $0\leqslant \mu\leqslant L<\infty$ and $g:V\to\mathbb R\cup\{+\infty\}$ is a properly closed and convex function and possibly nonsmooth.
\subsubsection{Gradient mapping}
\label{sec:APG-gdmap}	
To treat the nonsmooth part $g$, we introduce the tool of gradient mapping. 
Following~\cite[Chapter 2]{Nesterov:2013Introductory}, given any $\eta >0$, 
the composite gradient mapping
$\mathcal G_f(x,\eta)$ of $f$ at $x$
is defined by that
\begin{equation}\label{eq:gd-map}
	\mathcal G_f(x,\eta):=
	\frac{x-S_f(x,\eta)}{\eta}
	\quad x\in V,
\end{equation}
where $S_f(x,\eta):=\prox_{\eta g}(x-\eta \nabla h(x))$ and the proximal operator $\prox_{\eta g}$ has been defined by~\eqref{eq:prox}. 
Note that $S_f(x,\eta)$ is clearly well-defined and so is $\mathcal G_f(x,\eta)$. 

It is well known~\cite{Parikh:2014,rockafellar_convex_1970} that 
\begin{equation}\label{eq:prox-subg}
	\frac{x-\prox_{\eta g}(x)}{\eta}\in \partial g(	\prox_{\eta g}(x)) ,
\end{equation}
which yields the fact 
\begin{equation}\label{eq:sub-gradient}
	\mathcal G_f(x,\eta)-\nabla h(x)\in
	\partial g(S_f(x,\eta)).
\end{equation}
From this we conclude that the fixed-point set of $S_f(\cdot,\eta)$ is $\argmin f$. Indeed, $x = S_f(x,\eta)$ if and only if $0\in\partial f(x) $. We also observe from \eqref{eq:sub-gradient} that the gradient mapping~\eqref{eq:gd-map} is defined reversely from the proximal-gradient step for minimizing $f = h+g$, i.e., 
\[
\frac{S_f(x,\eta)-x}{\eta}\in- \nabla h(x) - \partial g(S_f(x,\eta)) = -\mathcal G_f(x,\eta).
\]
Hence it plays the 
role of the gradient $\nabla f$ in the smooth case.
Particularly, if $g = 0$, then 
$\mathcal G_f(x,\eta) = \nabla h(x)$ and $S_f(x,\eta)=x-\eta \nabla h(x)$ is 
nothing but a gradient step.

To move on, we present an auxiliary lemma, 
which is a key ingredient for our convergence analysis. 
As we will fix $\eta=1/L$, for simplicity, we set $\mathcal G_f(x):=
\mathcal G_f(x,1/L)$ and $S_f(x):=S_f(x,1/L)$. 
\begin{lemma}\label{lem:key-gd-map}
	Assume $f = h+g$, where $h\in\mathcal S_{\mu,L}^{1,1}$ with $0\leqslant \mu
	\leqslant L<\infty$ and $g:V\to\mathbb R\cup\{+\infty\}$ is properly closed 
	and convex. Then for any $x,y\in V$, 
	\begin{equation}\label{eq:key-gd-map}
		\begin{split}
			f(y)\geqslant{}&
			f(S_f(x))
			+\dual{\mathcal G_f(x),y-x}
			+\frac{\mu}{2}\nm{y-x}^2
			+\frac{1}{2L}
			\nm{\mathcal G_f(x)}^2.
		\end{split}
	\end{equation}
\end{lemma}
\begin{proof}
	Since $h\in\mathcal S_{\mu,L}^{1,1}$, applying~\eqref{eq:convex-mu} 
	and~\eqref{eq:Lip-L-equiv} gives
	\[
	\begin{split}
		h(x)-h(y)+\dual{\nabla h(x),y-x}
		\leqslant{}& -\frac{\mu}{2}\nm{x-y}^2,\\
		h(S_f(x))-	h(x)+\dual{\nabla h(x),x-S_f(x)}
		\leqslant{}& \frac{L}{2}\nm{S_f(x)-x}^2,
	\end{split}
	\]
	which implies that
	\[
	\begin{split}
		h(y)\geqslant{}&
		h(S_f(x,\eta))
		+\dual{\nabla h(x),y-S_f(x)}
		+\frac{\mu}{2}\nm{y-x}^2-\frac{1}{2L}
		\nm{\mathcal G_f(x)}^2.
	\end{split}
	\]
	Observing~\eqref{eq:sub-gradient}, we get
	\[
	\begin{split}
		g(y)\geqslant {}&
		g(S_f(x))
		+\dual{ \mathcal G_f(x)-\nabla h(x),
			y-S_f(x)}.
	\end{split}
	\]
	Summing the above two inequalities and using the split
	\begin{align*}
		\dual{ \mathcal G_f(x),
			y-S_f(x)}  &= 	\dual{ \mathcal G_f(x),
			y-x} + 	\dual{ \mathcal G_f(x),
			x-S_f(x)}\\
		&= 	\dual{ \mathcal G_f(x),
			y-x} + \frac{1}{L}
		\nm{\mathcal G_f(x)}^2,
	\end{align*}
	we finally arrive at~\eqref{eq:key-gd-map} and end the proof of this lemma.
	 \end{proof}
\begin{remark}
	For a fixed $x$, the right hand side of~\eqref{eq:key-gd-map} defines a quadratic approximation of $f$ at $x$, and it is strongly reminiscent of the quadratic lower bound approximation \eqref{eq:convex-mu} for the smooth case. However, compared to \eqref{eq:convex-mu}, the constant is shifted from $f(x)$ to a lower value $f(S_f(x)) + \frac{1}{2L} \nm{\mathcal G_f(x)}^2$. The first order part is $\mathcal G_f(x)$ instead of the subgradient at $x$. The quadratic part $\frac{\mu}{2}\nm{y-x}^2$ is due to the $\mu$-convexity.	
\end{remark}
\subsubsection{The proposed  method}
Based on the corrected semi-implicit scheme~\eqref{eq:ex-1-NAG} for 
NAG flow~\eqref{eq:ode-sys-NAG}, it is possible to 
generalize it to solve the differential inclusion~\eqref{eq:ADI-x-v}. 
Indeed, we just replace the gradient $\nabla f(y_k)$ with the gradient mapping $\mathcal G_f(y_k)$ and set the correction as $	x_{k+1} ={}S_f(y_k)$. More precisely, consider
\begin{equation}\label{eq:ex1-ode-OAG}
	\left\{
	\begin{aligned}
		\frac{y_k-x_{k}}{\alpha_k}={}& v_{k}-y_k,\\
		x_{k+1} ={}&S_f(y_k),\\
		\frac{v_{k+1}-v_{k}}{\alpha_k}={}&
		\frac{\mu}{\gamma_k}(y_k-v_{k+1})
		-\frac{1}{\gamma_k}
		\mathcal G_f(y_{k}),\\
		\frac{		\gamma_{k+1}  - \gamma_{k}}{\alpha_k}  
		={}&\mu-\gamma_{k+1}.
	\end{aligned}
	\right.
\end{equation}

Once $x_{k+1}=S_f(y_k)=\prox_{\eta g}(y_k-\eta\nabla h(y_k))$ is obtained, we can update $v_{k+1}$ with known datum $x_k,y_k,v_k$ and $x_{k+1}$. Thus in each iteration,~\eqref{eq:ex1-ode-OAG} only calls the proximal operation $\prox_{\eta g}$ once. 

We still use the step size $L\alpha_k^2=\gamma_k(1+\alpha_k)$ and summarize the semi-implicit scheme~\eqref{eq:ex1-ode-OAG} in Algorithm \ref{algo:APG}, which is called semi-implicit APGM (Semi-APGM for short). Also, the convergence rate is derived via the discrete Lyapunov function \eqref{eq:Lk}.
\begin{algorithm}[H]
	\caption{Semi-APGM for solving $\min_{x\in V} \left [h(x)+g(x) \right ]$}
	\label{algo:APG}
	\begin{algorithmic}[1] 
		\REQUIRE  $x_0,v_0\in V,\,\gamma_0>0$ and $\eta = 1/L$.
		\FOR{$k=0,1,\ldots$}
		\STATE Compute $\alpha_k>0$ such that $L\alpha_k^2= \gamma_k\big(1+\alpha_k\big)$.
		\STATE Update $\displaystyle{\gamma_{k+1} = \frac{\gamma_k+\mu\alpha_k}{1+\alpha_k}}$.
		\STATE Set $\displaystyle{y_k =\frac{x_k+\alpha_kv_k}{1+\alpha_k}}$ and $\displaystyle{w_k =\frac{\gamma_kv_k+\mu\alpha_ky_k}{\gamma_k+\mu\alpha_k}}$.
		\STATE Update $\displaystyle{x_{k+1} =\prox_{\eta g}(y_k-\eta\nabla h(y_k))}$.
		\STATE Set $\displaystyle{v_{k+1} = w_k+\frac{\gamma_k}{\gamma_{k+1}}
			\frac{x_{k+1}-y_k}{\alpha_k}}$.			
		\ENDFOR
	\end{algorithmic}
\end{algorithm}
\begin{theorem}\label{thm:conv-APG}
	For Algorithm \ref{algo:APG}, we have
	\begin{equation}\label{eq:conv-APG}
		\mathcal L_{k+1}
		\leqslant 
		\frac{	 \mathcal L_k}{1+\alpha_k }\quad\forall\,k\in\mathbb N,
	\end{equation}
	where $\mathcal L_k = 		f(x_k)-f(x^*) +
	\frac{\gamma_k}{2}
	\nm{v_k-x^*}^2
	$, and both~\eqref{eq:rate-Lk-ex0} and 
	\eqref{eq:rate-Lk-ex0-L} hold true here.
\end{theorem}
\begin{proof}
	The proof of~\eqref{eq:conv-APG} 
	is very similar to that of~\eqref{eq:decay-Lk-ex1}. 
	Indeed, replacing $x_{k+1}$ and its gradient $\nabla f(x_{k+1})$ in \eqref{eq:ex1-ode-NAG} respectively with $y_k$ and $\mathcal G_f(y_k)$, we can proceed as the proof of Lemma \ref{lem:bd-NAG-GS-1st} and use Lemma \ref{lem:key-gd-map} to obtain
	\begin{equation}\label{eq:conv-mid-APG}
		\begin{aligned}
			\widehat{		\mathcal L}_{k}-\mathcal L_{k} 
			\leqslant{}&-\alpha_k \widehat{		\mathcal L}_{k}
			+(1+\alpha_k)\left(f(y_k)-f(x_{k+1})\right)\\
			{}&\quad	+	\frac{\alpha_k^2}{2\gamma_k}
			\nm{\mathcal G_f(y_k)}^2-\frac{1+\alpha_k}{2L}\nm{\mathcal G_f(y_k)}^2,
		\end{aligned}
	\end{equation}
	where $\widehat{		\mathcal L}_{k}$ is defined by~\eqref{eq:hat-Lk}.
	Thanks to the relation $L\alpha_k^2=\gamma_k(1+\alpha_k)$, the second line of \eqref{eq:conv-mid-APG} vanishes, and inserting the identity $f(y_k)-f(x_{k+1})=	\widehat{		\mathcal L}_{k}-		\mathcal L_{k+1}$ into \eqref{eq:conv-mid-APG} 
	gives \eqref{eq:conv-APG}. Based on this, it is not hard to see that both~\eqref{eq:rate-Lk-ex0} and~\eqref{eq:rate-Lk-ex0-L} hold true. 
	This finishes the proof of this theorem.
	 \end{proof}

We mention that with another choice 
\[
L\alpha_k^2 = \mu\alpha_k^2+\gamma_{k}(1+\alpha_k),
\]
we can drop the sequence $\{v_{k}\}$ from~\eqref{eq:ex1-ode-OAG}. 
The procedure is not straightforward but very similar 
to that of Nesterov's optimal method in~\cite[page 80]{Nesterov:2013Introductory}. 
We omit the details and only list the following algorithm.
\begin{algorithm}[H]
	\caption{Simplified Semi-APGM}
	\label{algo:simple-APG}
	\begin{algorithmic}[1] 
		\REQUIRE  $x_0,y_0\in V,\,\gamma_0>0$ and $\eta = 1/L$.
		\FOR{$k=0,1,\ldots$}
		\STATE Compute $\alpha_k>0$ such that $L\alpha_k^2 = \mu\alpha_k^2+\gamma_{k}(1+\alpha_k)$.
		\STATE Update $\displaystyle{\gamma_{k+1} = \frac{\gamma_k+\mu\alpha_k}{1+\alpha_k}}$ and set $\beta_k
		=\frac{L\alpha_k}{\gamma_{k+1}(1+\alpha_k)}$.
		\STATE Set $\displaystyle{y_{k+1} ={}x_{k}+\beta_k(x_{k+1}-x_k)}$.
		\STATE Update $\displaystyle{x_{k+1} =\prox_{\eta g}(y_k-\eta\nabla h(y_k))}$.
		\ENDFOR
	\end{algorithmic}
\end{algorithm}
This can be viewed as a generalization of~\cite[Chapter 2, Constant Step Scheme, II]{Nesterov:2013Introductory} to problem~\eqref{min-comp-V}. Particularly, for convex case $\mu=0$, it is very close to FISTA~\cite{Beck2009}.
Both of them share the same spirit: applying one proximal gradient step first and then using some extrapolation formulae.
The difference comes only from the use of the two sequences $\{\alpha_k\}$ and $\{\beta_k\}$. 
We also claim that Algorithm \ref{algo:simple-APG} has the same accelerated convergence rate as Algorithm \ref{algo:APG}, i.e., $O(\min(L/k^2,(1+\sqrt{\mu/L})^{-k}))$. In contrast  FISTA is designed for $\mu =0$ and has only the sublinear rate $O(L/k^2)$.

We also mention that, accelerated proximal gradient methods for solving~\eqref{min-comp-V} with only one 
evaluation of ${\bf prox}_{\eta g}$ 
in each iteration can be found in~\cite{Siegel:2019} (only for strongly convex case) and~\cite[Chapter 2, Algorithm 2.2]{Lin2020} (for both convex and strongly convex cases).

Both Algorithms \ref{algo:APG} and~\ref{algo:simple-APG} cannot be applied 
directly to the general constraint case~\eqref{min-comp}. The main issue comes from
the definition~\eqref{eq:gd-map} of the gradient mapping $\mathcal G_f(x,\eta)$, 
where we shall impose the restriction $x\in Q$ and calculate the proximal operator $\prox_{\eta g}$ over $Q$ to obtain $S_f(x)\in Q$. For both two algorithms, we shall compute $x_{k+1}=S_f(y_k)=\prox_{\eta g}(y_k-\eta\nabla h(y_k))$. But the sequence $\{y_k\}$
in Algorithms \ref{algo:APG} and~\ref{algo:simple-APG} may be outside the constraint set. This is not acceptable 
because $\nabla h(y_k)$ might not exist: for instance, $Q = [0,\infty)$ and $h$ is the entropy function.

The original FISTA~\cite{Beck2009} and the methods in~\cite{Siegel:2019}  and~\cite[Chapter 2, Algorithm 2.2]{Lin2020} mentioned above, cannot be applied to the constrained problem~\eqref{min-comp} either. 
This stimulates us to propose a new operator splitting scheme to conquer this problem.

\subsection{An accelerated forward-backward method for constrained optimization}
\label{sec:AFB}
We now go back to the constrained problem~\eqref{min-comp}. As mentioned above, the tool of gradient mapping is not convenient for us to handle this case. To avoid using it, we utilize the separable structure of $f=h+g$ and apply explicit and implicit schemes for $h$ and $g$, respectively. This is the so-called 
{\it operator splitting} technique in ODE solvers and is also known as the forward-backward method. 

Let us start from the predictor-corrector scheme 
\eqref{eq:NAG-GS-1st-extra} and rewrite it as follows
\begin{equation}
	\left\{
	\begin{aligned}
		{}&y_k = \frac{x_k+\alpha_kv_k}{1+\alpha_k},\quad
		w_k = {}\frac{\gamma_kv_k+\mu\alpha_ky_k}{\gamma_k+\mu\alpha_k},\\
		{}&v_{k+1} = \mathop{\rm argmin}\limits_{v\in V}\left\{
		\dual{\nabla f(y_k),v} + \frac{\gamma_{k}+\mu\alpha_k}{2\alpha_k}\nm{v-w_k}^2
		\right\},\\
		{}&x_{k+1} = \frac{x_k+\alpha_kv_{k+1}}{1+\alpha_k}.
	\end{aligned}
	\right.
\end{equation}
For minimizing $f=h+g$ over $Q$, we modify the above method as follows
\begin{equation}\label{eq:AFBM-1st-argmin}
	\left\{
	\begin{aligned}
		{}&y_k = \frac{x_k+\alpha_kv_k}{1+\alpha_k},\quad
		w_k = {}\frac{\gamma_kv_k+\mu\alpha_ky_k}{\gamma_k+\mu\alpha_k},\\
		{}&v_{k+1} = \mathop{\rm argmin}\limits_{v\in Q}\left\{
		g(v) + \dual{\nabla h(y_k),v} + \frac{\gamma_{k}+\mu\alpha_k}{2\alpha_k}\nm{v-w_k}^2
		\right\},\\
		{}&x_{k+1} = \frac{x_k+\alpha_kv_{k+1}}{1+\alpha_k},
	\end{aligned}
	\right.
\end{equation}
where $x_0,\,v_0\in Q$ and the parameter sequence $\{\gamma_k\}$ comes from the implicit discretization~\eqref{eq:im-gammat-NAG} of the 
equation~\eqref{eq:gammat-NAG}. Clearly, as convex combinations are used, the method~\eqref{eq:AFBM-1st-argmin} preserves the three-term sequence $\{(x_k,y_k,v_k)\}$ in $Q$ and it requires the proximal computation of $g$ over $Q$ only once in each iteration.

We choose $L\alpha_k^2=\gamma_{k}(1+\alpha_k)$ as before and 
rewrite~\eqref{eq:AFBM-1st-argmin} in Algorithm \ref{algo:AFB}, which is called semi-implicit accelerated forward-backward (Semi-AFB for short) method.

\begin{algorithm}[h]
	\caption{Semi-AFB method for solving $\min_{x\in Q} \left [h(x)+g(x) \right ]$}
	\label{algo:AFB}
	\begin{algorithmic}[1] 
		\REQUIRE  $x_0,v_0\in Q,\,\gamma_0>0$ and $L>0$.
		\FOR{$k=0,1,\ldots$}
		\STATE Compute $\alpha_k>0$ such that $L\alpha_k^2= \gamma_k\big(1+\alpha_k\big)$.
		\STATE Update $\displaystyle{\gamma_{k+1} = \frac{\gamma_k+\mu\alpha_k}{1+\alpha_k}}$.
		\STATE Set $\displaystyle{y_k = \frac{x_k+\alpha_kv_k}{1+\alpha_k}}$ and $\displaystyle{w_k = {}\frac{\gamma_kv_k+\mu\alpha_ky_k}{\gamma_k+\mu\alpha_k}}$.
		\STATE Update $\displaystyle{v_{k+1} = {}\mathop{\rm argmin}\limits_{v\in Q}\left\{
			g(v) + \dual{\nabla h(y_k),v} + \frac{\gamma_{k}+\mu\alpha_k}{2\alpha_k}\nm{v-w_k}^2
			\right\}}$.			
		\STATE Update $\displaystyle{x_{k+1} = {}\frac{x_k+\alpha_kv_{k+1}}{1+\alpha_k}}$.	
		\ENDFOR
	\end{algorithmic}
\end{algorithm}

In~\cite{Tseng2008}, Tseng considered problem~\eqref{min-comp} only with convex assumption, i.e., $\mu=0$, and proposed an 
APGM that possesses the rate $O(L/k^2)$. By using the technique of estimate sequence, Nesterov~\cite{Nesterov_2012} presented an accelerated method for solving~\eqref{min-comp} with the assumption that $h$ is $L$-smooth over $Q$ and $g$ is $\mu$-strongly convex with $\mu\geqslant 0$. Both our Algorithm \ref{algo:AFB} and Nesterov's method generate a three-term sequence $\{(x_k,y_k,v_k)\}$ and have the same accelerated rate $O(\min(L/k^2,(1+\sqrt{\mu/L})^{-k}))$; see \cite[Theorem 6]{Nesterov_2012} and our Theorem \ref{thm:conv-AFB}. However, as mentioned in~\cite{Beck2009}, 
the later used an accumulated history of the past iterations to build recursively a sequence of estimate functions, and in each iteration, to update $x_{k+1}$ and $v_{k+1}$, Nesterov's method in~\cite{Nesterov_2012} calls $\prox_{ g}$ over $Q$ twice. 

Below, we shall establish the convergence rate of Algorithm \ref{algo:AFB} via the analysis of a Lyapunov function. It is well known~\cite[Eq (2.9)]{Nesterov_2012} that the first-order optimality condition for $v_{k+1}$ in \eqref{eq:AFBM-1st-argmin} is the variational inequality
\[
\dual{\nabla h(y_k)+\frac{\gamma_{k}+\mu\alpha_k}{\alpha_k}(v_{k+1}-w_k)
	+p_{k+1},x-v_{k+1}}\geqslant 0\quad\forall\,x\in Q,
\]
where $p_{k+1}\in\partial g(v_{k+1})$. Expanding $w_k$, we observe the relation 
\begin{equation}\label{eq:vk1-ineq}
	\begin{split}
		{}&	\gamma_k\inner{v_{k+1}-v_k, v_{k+1} -x} \\
		\leqslant  {}&
		\mu\alpha_k\inner{y_{k} - v_{k+1}, v_{k+1} - x}
		-
		\alpha_k \dual{\nabla h(y_k)+p_{k+1}, v_{k+1} - x},
	\end{split}
\end{equation}
where $x\in Q$ is arbitrary.
\begin{theorem}
	\label{thm:conv-AFB}
	For Algorithm \ref{algo:AFB}, we have 
	\begin{equation}	\label{eq:diff-Lk-AFBM-1st}
		\mathcal L_{k+1}\leqslant 
		\frac{	 \mathcal L_k}{1+\alpha_k }\quad\forall\,k\in\mathbb N,
	\end{equation}
	where $\mathcal L_k=
	f(x_k)-f(x^*) +
	\frac{\gamma_k}{2}
	\nm{v_k-x^*}^2$, and both~\eqref{eq:rate-Lk-ex0} and 
	\eqref{eq:rate-Lk-ex0-L} hold true here.
\end{theorem}
\begin{proof}	
	As before, we calculate the difference
	\[
	\begin{split}
		\mathcal L_{k+1}-\mathcal L_{k}
		={}& f(x_{k+1})-f(x_k)+
		\frac{\alpha_k}{2}(\mu- \gamma_{k+1})
		\nm{v_{k+1}-x^*}^2\\
		{}&	+\gamma_k
		\inner{v_{k+1}-v_k, v_{k+1} -x^*} -
		\frac{\gamma_k}2
		\nm{v_{k+1}-v_k}^2.
	\end{split}
	\]
	Thanks to~\eqref{eq:vk1-ineq}, we have
	\begin{equation}\label{eq:diff-vk}
		\begin{split}
			{}&	\gamma_k\inner{v_{k+1}-v_k, v_{k+1} -x^*} \\
			\leqslant  {}&
			\mu\alpha_k\inner{y_{k} - v_{k+1}, v_{k+1} - x^{*}}
			-
			\alpha_k \dual{\nabla h(y_k)+p_{k+1}, v_{k+1} - x^{*}}.
		\end{split}
	\end{equation}
	where $p_{k+1}	\in\partial g(v_{k+1})$.
	By Lemma \ref{lem:cross}, the first term in~\eqref{eq:diff-vk} is split as follows
	\begin{equation*}
		\begin{split}
			{}&2\mu\alpha_k\inner{y_{k} - v_{k+1}, v_{k+1} - x^{*}} \\
			={}&\mu\alpha_k\left(\left\|y_{k}-x^{*}\right\|^{2}
			-\|y_{k}-v_{k+1}\|^{2}
			-\left\|v_{k+1}-x^{*}\right\|^{2}\right).
		\end{split}
	\end{equation*}
	
	The gradient term in~\eqref{eq:diff-vk} is more subtle. 
	Firstly, by convexity of $g$, we have 
	\[
	\begin{split}
		{}&
		-\alpha_k \dual{p_{k+1}, v_{k+1} - x^{*}}
		\leqslant -\alpha_k\left(g(v_{k+1})-g(x^*)\right)\\
		={}&-\alpha_k\left(g(x_{k+1})-g(x^*)\right)
		-\alpha_k\left(g(v_{k+1})-g(x_{k+1})\right),
	\end{split}
	\]
	and secondly, according to the update for $y_{k}$ 
	(see step 4 in Algorithm \ref{algo:AFB}), we find
	\begin{align*}
		{}&
		-\alpha_k \dual{ \nabla h(y_k), v_{k+1} - x^{*}} \\
		={}&-\alpha_k\dual{\nabla h(y_k),v_{k+1} - v_k} 
		-\alpha_k\dual{  \nabla h(y_k), v_{k} - x^{*}}\\
		={}&-\alpha_k\dual{\nabla h(y_k),v_{k+1} - v_k}
		- \dual{ \nabla h(y_k), y_{k} - x_{k}} 
		- \alpha_k\dual{  \nabla h(y_k), y_{k} - x^{*}}.
	\end{align*}
	As $h$ is $\mu$-strongly convex on $Q$, by the fact $\{(x_k,y_k,v_k)\}\subset Q$, it follows that
	\begin{align*}
		{}&
		- \dual{ \nabla h(y_k), y_{k} - x_{k}} 
		- \alpha_k\dual{  \nabla h(y_k), y_{k} - x^{*}}\\
		\leqslant {}&h(x_k)-h(y_k) -\frac{\mu}{2}\nm{x_k-y_k}^2
		- \alpha_k\left(h(y_k)-h(x^*)\right) 
		-\frac{\mu\alpha_k}{2}\nm{x^*-y_k}^2\\
		={}&
		(1+\alpha_k)\left(h(x_{k+1})-h(y_{k})\right)
		- \alpha_k\left(h(x_{k+1})-h(x^*)\right) 
		-\frac{\mu\alpha_k}{2}\nm{x^*-y_k}^2\\		
		{}&\qquad+h(x_{k})-h(x_{k+1}) -\frac{\mu}{2}\nm{x_k-y_k}^2.
	\end{align*}
	Therefore, collecting all the estimates and dropping surplus negative terms 
	related to $-\nm{x_k-y_k}^2$ and $-\|y_{k}-v_{k+1}\|^{2}$, we get
	\begin{equation}\label{eq:diff-Lk-AFBM-1st-mid}
		\begin{aligned}
			{}&
			\mathcal L_{k+1}-\mathcal L_{k}\\
			\leqslant {}&-\alpha_k\mathcal L_{k+1}	+(1+\alpha_k)\left(h(x_{k+1})-h(y_{k})\right)
			-\alpha_k\dual{\nabla h(y_k),v_{k+1} - v_k}\\
			{}&	-	\frac{\gamma_k}2	\nm{v_{k+1}-v_k}^2
			+g(x_{k+1})-g(x_{k})-\alpha_k\left(g(v_{k+1})-g(x_{k+1})\right).
		\end{aligned}
	\end{equation}
	
	Let us consider the additional terms in~\eqref{eq:diff-Lk-AFBM-1st-mid}.
	In view of~\eqref{eq:Lip-L-equiv}, we have 
	\[
	h(x_{k+1})-h(y_k)\leqslant \dual{\nabla h(y_k),x_{k+1}-y_k} + \frac{L}{2}\nm{x_{k+1}-y_k}^2.
	\]
	Thanks to the extrapolation step for $x_{k+	1}$ (see step 6 in 
	Algorithm \ref{algo:AFB}), we find a crucial relation
	\[
	x_{k+1}-y_k = \frac{\alpha_k}{1+\alpha_k}(v_{k+1}-v_k),
	\]
	which gives that
	\[
	\begin{split}
		{}&(1+\alpha_k)\left(h(x_{k+1})-h(y_{k})\right)
		-\alpha_k\dual{\nabla h(y_k),v_{k+1} - v_k}
		-	\frac{\gamma_k}2	\nm{v_{k+1}-v_k}^2\\
		\leqslant {}&\frac{L\alpha_k^2}{2(1+\alpha_k)}\nm{v_{k+1}-v_k}^2
		-	\frac{\gamma_k}2	\nm{v_{k+1}-v_k}^2=0,
	\end{split}
	\]
	as $L\alpha_k^2=\gamma_k(1+\alpha_k)$. Moreover, since $x_{k+1}$ is a convex combination of $x_k$ and $v_{k+1}$, the estimate follows
	\[
	\begin{split}
		{}&g(x_{k+1})-g(x_{k})-\alpha_k\left(g(v_{k+1})-g(x_{k+1})\right)	\\
		=	{}&(1+\alpha_k)g(x_{k+1})-g(x_{k})-\alpha_kg(v_{k+1})
		\leqslant{}0.
	\end{split}
	\]
	Plugging this and the previous inequality into~\eqref{eq:diff-Lk-AFBM-1st-mid} gives
	\[
	\mathcal L_{k+1}-\mathcal L_{k}\leqslant -\alpha_k\mathcal L_{k+1},
	\]
	which establishes~\eqref{eq:diff-Lk-AFBM-1st}.
	
	By the relation $L\alpha_k^2=\gamma_{k}(1+\alpha_k)$ and the contraction~\eqref{eq:diff-Lk-AFBM-1st}, it is clear that the two estimates~\eqref{eq:rate-Lk-ex0} 
	and~\eqref{eq:rate-Lk-ex0-L} hold true. This completes the proof of this theorem.	
\end{proof}
\vskip0.3cm\noindent{\bf Acknowledgments}
	The authors would like to thank the anonymous reviewers for valuable suggestions and careful comments, which significantly improved the qualify of an early version of the paper. 

\appendix

\section{Spectral Analysis}\label{app:proof}
\medskip\noindent{\bf Proof of Theorem \ref{thm:spec-bd-GS}.}	
Let us start from the scalar case
\[
R = 
\begin{pmatrix}
	-a& \; c\\
	-b& \; -d
\end{pmatrix},
\]
where $a,b,c,d\geqslant 0$ and ${\rm tr}\,R<0<\det R$. 
Set
\[
M = \begin{pmatrix}
	-a& \; 0
	\\-b& \; -d
\end{pmatrix},
\quad 
N = \begin{pmatrix}
	0&  \; c
	\\0& \; 0
\end{pmatrix}.
\]
By direct computation we have 
\begin{equation}\label{eq:Ea}
	\begin{split}
		E(\alpha,R) :={}&(I - \alpha M)^{-1} (I + \alpha N) 
		=\frac{1}{\delta}	\begin{pmatrix}
			1+d\alpha& c\alpha(1+d\alpha)\\
			-b\alpha&
			1+a\alpha-bc\alpha^2
		\end{pmatrix},
	\end{split}
\end{equation}
where $\delta:=(1+a\alpha)(1+d\alpha)$.
Since ${\rm tr}\,R<0$, we see that 
\[
0 < \det E(\alpha,R) = \frac{1}{\delta} =
\frac{1}{1+\snm{{\rm tr}\,R}\alpha+ad\alpha^2}<1.
\]
Note that any eigenvalue $\theta$ of $E(\alpha,R)$ satisfies 
\begin{equation}\label{eq:ta}
	\theta^2-{\rm tr}\,E(\alpha,R)\theta + \det E(\alpha,R)=0.
\end{equation}

We now arrive at the following lemma, which says 
the spectrum of $E(\alpha,R)$ can be transformed 
to the circle $	\snm{\theta} = \sqrt{\det E(\alpha,R)}< 1$, with proper $\alpha$.
\begin{lemma}\label{lem:eig-R}
	Assume
	\[
	R = 
	\begin{pmatrix}
		-a& \; c\\
		-b& \; -d
	\end{pmatrix},
	\]
	with $a,b,c,d\geqslant 0$ such that ${\rm tr}\, R <{} 0<\det R $. 
	Let $E(\alpha,R)$ be defined by~\eqref{eq:Ea}.
	If $\alpha>0$ satisfies
	\begin{equation}\label{eq:cond-a-spec}
		|{\rm tr}\, R| - 2\sqrt{\det R} \leqslant bc \, \alpha  \leqslant  |{\rm tr}\, R| + 2\sqrt{\det R},
	\end{equation}
	then we have 
	\[
	\rho(E(\alpha,R)) =  \frac{1}{\sqrt{1+\snm{{\rm tr}\,R}\alpha+ad\alpha^2}}<1.
	\]	
\end{lemma}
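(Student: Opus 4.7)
The plan is to exploit the $2\times 2$ structure and read off the spectral radius from the characteristic polynomial $\theta^2 - \operatorname{tr} E(\alpha,R)\,\theta + \det E(\alpha,R) = 0$. The preceding computation already gives
\[
\det E(\alpha,R) = \frac{1}{(1+a\alpha)(1+d\alpha)} = \frac{1}{1+\snm{\operatorname{tr} R}\alpha + ad\alpha^2},
\]
so the conclusion will follow immediately once we show that the two eigenvalues of $E(\alpha,R)$ are a complex-conjugate pair, because then both have modulus $\sqrt{\det E(\alpha,R)}$, which is exactly the claimed $\rho(E(\alpha,R))$.

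First I would compute $\operatorname{tr} E(\alpha,R)$ directly from the explicit formula \eqref{eq:Ea}:
\[
\operatorname{tr} E(\alpha,R) = \frac{2+(a+d)\alpha - bc\alpha^2}{(1+a\alpha)(1+d\alpha)}.
\]
The two eigenvalues form a complex-conjugate pair exactly when the discriminant of the characteristic polynomial is non-positive, i.e.\ when $\bigl(\operatorname{tr} E(\alpha,R)\bigr)^2 \leqslant 4\det E(\alpha,R)$. Multiplying through by the positive quantity $(1+a\alpha)(1+d\alpha)$ and writing $|\operatorname{tr} R| = a+d$ and $\det R = ad+bc$, this is equivalent to
\[
\bigl(2+|\operatorname{tr} R|\alpha - bc\alpha^2\bigr)^2 \leqslant 4\bigl(1+|\operatorname{tr} R|\alpha + ad\alpha^2\bigr).
\]
Expanding the square and cancelling the linear-in-$\alpha$ cross terms reduces this to $\alpha^2\bigl(|\operatorname{tr} R|-bc\alpha\bigr)^2 \leqslant 4(ad+bc)\alpha^2 = 4\det R\cdot\alpha^2$, which after dividing by $\alpha^2 > 0$ and taking square roots is precisely the two-sided bound \eqref{eq:cond-a-spec}.

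Once the eigenvalues are known to be a complex-conjugate pair $\theta,\bar\theta$, one has $|\theta|^2 = \theta\bar\theta = \det E(\alpha,R)$, which yields the claimed identity for $\rho(E(\alpha,R))$. The only step requiring care is the algebraic cancellation that reduces the discriminant inequality to the clean form $\bigl||\operatorname{tr} R|-bc\alpha\bigr| \leqslant 2\sqrt{\det R}$; this is routine but must be done carefully so that the two-sided form of \eqref{eq:cond-a-spec} emerges, rather than a one-sided estimate. I would also briefly note that under \eqref{eq:cond-a-spec} the eigenvalues are genuinely non-real (the discriminant is strictly negative on the interior of the interval, and at the endpoints a repeated real eigenvalue of the same modulus $\sqrt{\det E(\alpha,R)}$ still yields the same $\rho$), so the identity holds throughout the stated range.
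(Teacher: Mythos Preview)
Your proof is correct and follows essentially the same approach as the paper: both reduce to showing that the discriminant $(\operatorname{tr} E(\alpha,R))^2-4\det E(\alpha,R)\leqslant 0$ is equivalent to the two-sided bound \eqref{eq:cond-a-spec}, whence $\rho(E(\alpha,R))=\sqrt{\det E(\alpha,R)}$. The only cosmetic difference is that the paper passes through the intermediate form $\sqrt{\chi}-1\leqslant \alpha\sqrt{\det R}\leqslant \sqrt{\chi}+1$ with $\chi=(1+a\alpha)(1+d\alpha)$ before squaring, whereas your direct expansion leading to the perfect square $\alpha^2\bigl(|\operatorname{tr} R|-bc\alpha\bigr)^2\leqslant 4\alpha^2\det R$ is a bit more transparent.
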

\begin{proof}
	If $\Delta=\snm{{\rm tr}\,E(\alpha,R)}^2-4\det E(\alpha,R)\leqslant 0$, then any solution to~\eqref{eq:ta}  satisfies that $\snm{\theta} = \sqrt{\det E(\alpha,R)}$ and the conclusion follows.
	By direct calculation, $\Delta\leqslant 0$ is equivalent to
	\[
	\sqrt{\delta} - 1 \leqslant\alpha \sqrt{\det R}\leqslant\sqrt{\delta} + 1.
	\]
	Square the inequality $\alpha \sqrt{\det R} - 1 \leqslant\sqrt{\delta}$ and cancel one $\alpha$ to get the upper bound in ~\eqref{eq:cond-a-spec}. The lower bound can be proved similarly. 
	 \end{proof}

We now in the position of establishing Theorem \ref{thm:spec-bd-GS}. 		We first consider $ G= G_{_{\rm HB}}$, for which we have
\[
E(\alpha,G)
=
\frac{1}{1+2\alpha}
\begin{pmatrix}
	(1+2\alpha)I&\quad \alpha(1+2\alpha)I\\
	-\alpha A /\mu&\quad  I-A\alpha^2/\mu
\end{pmatrix}.
\]
It is clear that $\theta \in\sigma (E(\alpha,G))\Leftrightarrow \theta  \in\sigma(E(\alpha,R(\lambda)))$, where $E(\alpha,R(\lambda))$ is defined by~\eqref{eq:Ea} with 
\[
R(\lambda) = 
\begin{pmatrix}
	0&\; 1\\
	-\lambda /\mu&\; -2
\end{pmatrix},\quad \lambda \in\sigma(A).
\]												
As $|{\rm tr}\, R(\lambda)| \leqslant 2\sqrt{\det R(\lambda)}$, by Lemma \ref{lem:eig-R}, if 
\begin{equation}\label{eq:cond-a}
	0<\alpha\leqslant  2/\sqrt{\kappa(A)},
\end{equation}
then we can obtain
\[
\rho(	E(\alpha,G))=\max_{\lambda \in \sigma(A)}\rho(E(\alpha,R(\lambda)))=
\frac{1}{\sqrt{1+2\alpha}}.
\]
Similarly, for $G = G_{_{\rm NAG}}$ with condition~\eqref{eq:cond-a}, we can establish 
\[
\rho(	E(\alpha,G))=\max_{\lambda \in \sigma(A)}\rho(E(\alpha,R(\lambda)))
=	\frac{1}{\sqrt{1+2\alpha+\alpha^2}}
\leqslant \frac{1}{\sqrt{1+2\alpha}}.
\]
Consequently, for both two cases, taking $\alpha = 2/\sqrt{\kappa(A)}$ yields the spectrum bound
\[
\rho(	E(\alpha,G))
\leqslant \frac{1}{\sqrt{1+4/\sqrt{\kappa(A)}}}
\leqslant\frac{1}{1+1/\sqrt{\kappa(A)}}.
\] 	
This concludes the proof of Theorem \ref{thm:spec-bd-GS}.
\hfill\ensuremath{\square}

\medskip\noindent{\bf Proof of Theorem \ref{thm:spec-mu-0}.}
Observe that $\widetilde E_{k}$ is similar with 
\[
\begin{pmatrix}
	I & \; O \\
	O & \; \gamma_{k} I
\end{pmatrix}^{-1}\begin{pmatrix}
	I & \; O \\
	O & \; \gamma_{k+1} I
\end{pmatrix}
E(\alpha_{k}, G(\gamma_{k+1}))=
\frac{H_{k}}{1+\alpha_{k}},
\]
where 
\[
H_{k}=\begin{pmatrix}
	I & \; \alpha_{k}I \\
	-A\alpha_{k}/\gamma_{k}& \quad
	I-A\alpha_{k}^2/\gamma_{k}
\end{pmatrix}.
\]
To prove~\eqref{eq:rho}, it is sufficient to verify $\rho \left(H_{k}\right)=1$.

Given any eigenvalue $\theta\in\sigma(H_{k})$, it solves 
\[
\theta^2+(\lambda\alpha_{k}^2/\gamma_{k}-2)\theta+1=0,
\]
with some $\lambda\in\sigma (A)\subset
[0,L]$. By~\eqref{eq:im-gk}, $\{\gamma_k\}$ is decreasing and thus $\gamma_{k}\leqslant \gamma_0=L$. According to our choice $L\alpha_k^2=\gamma_k(1+\alpha_k)$, we have $0<\alpha_k\leqslant 2$ and moreover $0<\lambda\alpha_k^2/\gamma_k\leqslant L\alpha_k^2/\gamma_k = 1+\alpha_k\leqslant 3$. This implies 
$\Delta = (\lambda\alpha_{k}^2/\gamma_{k}-2)^2-4\leqslant 0$ for all $\lambda\in\sigma (A)$. Therefore, we conclude that $\snm{\theta} = 1$ for all $\theta\in\sigma(H_{k})$, which proves $\rho \left(H_{k}\right)=1$ and thus establishes~\eqref{eq:rho}.

Thanks to Lemma \ref{lem:gk-ak-im}, there holds
\[
\frac{1}{(k+1)^2}
\leqslant 
\frac{\gamma_{k}}{\gamma_0}=
\prod_{i=0}^{k-1}\frac{1}{1+\alpha_i}\leqslant\frac{4}{(k+2)^2}.
\]
This proves~\eqref{eq:rho-prod} and completes the 
proof of Theorem \ref{thm:spec-mu-0}.
\hfill\ensuremath{\square}

\section{Decay Rates}
\begin{lemma}
	\label{lem:gk-ak}
	Let $\gamma_0>0$ and $ \mu\geqslant0$ be given and assume there is a real positive sequence $\{L_k\}$ such that $ L_k\geqslant   \mu$. Define $\{(\alpha_k,\gamma_k)\}$ by that
	\begin{equation}	\label{eq:gk-ak}
		\left\{
		\begin{split}
			L_k\alpha_k^2 = {}&\gamma_{k+1},\quad \alpha_k>0,\\
			\gamma_{k+1} = {}&(1-\alpha_k)\gamma_k+\mu\alpha_k.
		\end{split}
		\right.
	\end{equation}
	Then we have $\gamma_k>0,0<\alpha_k\leqslant 1$ and $\alpha_k
	\geqslant \sqrt{\min\{\gamma_1,\mu\}/L}$, where $L: = \sup_{k\in\mathbb N}L_k$. Moreover, for all $k\geqslant 1$,
	\begin{equation}\label{eq:upbd}
		\prod_{i=0}^{k-1}(1-\alpha_i)\leqslant
		\min\left\{
		4\left(2+\sum_{i=0}^{k-1}\sqrt{\frac{\gamma_0}{L_i}}\right)^{-2}
		,\,
		\left(1-\sqrt{\frac{\min\{\gamma_1,\mu\}}{L}}\right)^{k}
		\right\},
	\end{equation}
	and if $\mu=0$, then we have the lower bound
	\begin{equation}\label{eq:lowbd}
		\prod_{i=0}^{k-1}(1-\alpha_i)\geqslant
		\left(1+\sum_{i=0}^{k-1}\sqrt{\frac{\gamma_0}{L_i}}\right)^{-2}.
	\end{equation}
\end{lemma}
\begin{proof}
	Let us first check that $0<\alpha_k\leqslant 1$ and $\gamma_k>0$.
	Since $\gamma_0>0$, by~\eqref{eq:gk-ak} we have 
	\[
	L_0\alpha_0^2 = \gamma_1=(1-\alpha_0)\gamma_0+\mu\alpha_0,
	\]
	from which we claim that $0< \alpha_0\leqslant 1$. Thus by the second step in~\eqref{eq:gk-ak} we have $\gamma_1>0$. A sequential argument implies that $0< \alpha_k\leqslant 1$ and $\gamma_k>0 $ for all $k\geqslant 0$. 
	
	It is not hard to find the fact: if $\gamma_0>\mu$, then $\mu<\gamma_{k+1}<\gamma_k$ and if $\gamma_0<\mu$, then $\gamma_{k}<\gamma_{k+1}<\mu$. 
	Particularly, if $\gamma_0=\mu$, then $\gamma_k=\mu$. 
	Based on this observation and the fact $L_k\leqslant L$, we 
	conclude that $\alpha_k\geqslant \sqrt{\min\{\gamma_1,\mu\}/L}$ and thus
	\[
	\prod_{i=0}^{k-1}(1-\alpha_i)\leqslant
	\left(1-\sqrt{\frac{\min\{\gamma_1,\mu\}}{L}}\right)^{k}.
	\]
	
	Next, let us prove the estimate
	\begin{equation}\label{eq:upbd-2}
		\rho_k
		\leqslant
		4\left(2+\sum_{i=0}^{k-1}\sqrt{\frac{\gamma_0}{L_i}}\right)^{-2},
	\end{equation}
	where  $\rho_k$ is defined by~\eqref{eq:rhok-1-a}. 
	We start from the trivial equality
	\begin{equation}\label{eq:1/gk}
		\frac{1}{\sqrt{\rho_{k+1}}} - \frac{1}{\sqrt{\rho_{k}}}
		=\frac{\sqrt{\rho_{k}}-\sqrt{\rho_{k+1}}}
		{\sqrt{\rho_{k}\rho_{k+1}}}=
		\frac{1-\sqrt{1-\alpha_k}}{\sqrt{\rho_{k+1}}}
		=\frac{\alpha_k}{\sqrt{\rho_{k+1}}(1+\sqrt{1-\alpha_k})},
	\end{equation}
	where we used the relation $\rho_{k+1}=\rho_{k}(1-\alpha_k)$.
	By~\eqref{eq:gk-ak}, for any $i\geqslant 0$, it holds that
	\begin{equation}\label{eq:rhok-gi}
		\gamma_{i+1}=(1-\alpha_i)\gamma_{i}+\mu\alpha_i
		\geqslant (1-\alpha_i)\gamma_i,
	\end{equation}
	and multiplying the above inequality from $i=0$ to $i=k-1$ gives $\rho_{k}\leqslant \gamma_{k}/\gamma_0$.
	Plugging this into~\eqref{eq:1/gk} and using the relation $L_k\alpha_k^2=\gamma_{k+1}$ and the fact $0<\alpha_k\leqslant 1$ imply
	\[
	\frac{1}{\sqrt{\rho_{k+1}}} - \frac{1}{\sqrt{\rho_{k}}}
	\geqslant\frac{\sqrt{\gamma_0}\alpha_k}
	{\sqrt{\gamma_{k+1}}(1+\sqrt{1-\alpha_k})}
	\geqslant\frac{\sqrt{\gamma_0}}{2\sqrt{L_k}},
	\]
	which further indicates that
	\[
	\frac{1}{\sqrt{\rho_{k}}} - 
	\frac{1}{\sqrt{\rho_{0}}} 
	\geqslant \sum_{i=0}^{k-1}\frac{\sqrt{\gamma_0}}{2\sqrt{L_i}}.
	\]
	Therefore, a simple calculation proves~\eqref{eq:upbd-2} and 
	concludes the proof of this lemma.
	
	For $\mu=0$, we have the relation $\rho_{k}=\gamma_{k}/\gamma_0$, and proceeding as the above derivation, it is not hard to establish the lower bound~\eqref{eq:lowbd}. This
	concludes the proof of this lemma.
	
\end{proof}

Similarly, we can establish the following result, the proof of which is omitted for simplicity.
\begin{lemma}
	\label{lem:gk-ak-im}
	Let $\gamma_0>0$ and $ \mu\geqslant0$ be given and assume there is a real positive sequence $\{L_k\}$ such that $ L_k\geqslant   \mu$. Define $\{(\alpha_k,\gamma_k)\}$ by that
	\[
	\left\{
	\begin{split}
		\gamma_{k+1} = {}&\gamma_k+\alpha_k(\mu-\gamma_{k+1}),\\
		L_k\alpha_k^2 = {}&\gamma_{k}(1+\alpha_k),\, \alpha_k>0.
	\end{split}
	\right.
	\]
	Then we have $\gamma_k>0$ and 
	$\alpha_k\geqslant \sqrt{\min\{\gamma_0,\mu\}/L}$, where $L: = \sup_{k\in\mathbb N}L_k$. Moreover, for all $k\geqslant 1$,
	\[
	\prod_{i=0}^{k-1}\frac{1}{1+\alpha_i}\leqslant
	\min\left\{
	4\left(2+\sum_{i=0}^{k-1}\sqrt{\frac{\gamma_0}{L_i}}\right)^{-2}
	,\,
	\left(1+\sqrt{\frac{\min\{\gamma_0,\mu\}}{L}}\right)^{-k}
	\right\},
	\]
	and if $\mu=0$, then we have the lower bound
	\[
	\prod_{i=0}^{k-1}\frac{1}{1+\alpha_i}\geqslant
	\left(1+\sum_{i=0}^{k-1}\sqrt{\frac{\gamma_0}{L_i}}\right)^{-2}.
	\]
\end{lemma}

\end{document}